\definecolor{kugray5}{RGB}{224,224,224}
\newtheorem{Theorem}{Theorem}
\newtheorem{theorem}[Theorem]{Theorem}
\newtheorem{proposition}[Theorem]{Proposition}
\newtheorem{corollary}[Theorem]{Corollary}
\newtheorem{lemma}[Theorem]{Lemma}
\theoremstyle{definition}
\newtheorem{definition}[Theorem]{Definition}
\newtheorem{remark}[Theorem]{Remark}
\newcommand\cF{\mathcal{F}}
\newcommand\cL{\mathcal{L}}
\newcommand\cT{\mathcal{T}}
\newcommand\cU{\mathcal{U}}
\newcommand\EE{\mathbb{E}}
\newcommand\HH{\mathbb{H}}
\newcommand\QQ{\mathbb{Q}}
\newcommand\RR{\mathbb{R}}
\newcommand\rD{\mathrm{D}}
\newcommand\rH{\mathrm{H}}
\newcommand\rR{\mathrm{R}}
\newcommand\NL{{\rm NL}}
\newcommand\rank{{\rm rank}}
\newcommand\Pic{{\rm Pic}}
\newcommand{\SO}{{\rm SO}}
\newcommand{\Hdg}{{\rm Hdg}}
\newcommand{\CH}{{\rm CH}}
\newcommand{\BV}{{\rm BV}}
\title[Tautological classes of hyper-Kähler manifolds. {\it Erratum}]{Tautological classes on moduli spaces of hyper-Kähler manifolds. {\it Erratum}}
\author{Nicolas Bergeron} 
\address{Sorbonne Universit\'e, Institut de Math\'ematiques de Jussieu--Paris Rive Gauche, CNRS, Univ Paris Diderot, F-75005, Paris, France}
\email{nicolas.bergeron@imj-prg.fr}
\author{Zhiyuan Li}
\address{Shanghai Center for Mathematical Sciences \\ Fudan University\\
220 Handan Road, Shanghai, 200433 China\\
	}
\email{zhiyuan\_li@fudan.edu.cn}
\date{April 2021}
\begin{document}

\maketitle

\begin{abstract} This note is an erratum to the paper ``Tautological classes on moduli spaces of hyper-K\"ahler manifolds.'' Thorsten Beckman and Mirko Mauri have pointed to us a gap in the proof of \cite[Theorem 8.2.1]{Duke}. We do not know how to correct the proof. We can only recover a partial statement. This gap affects the proof of one of the two main results of \cite{Duke}, we explain how to correct it. 
\end{abstract}

\section{Notation}

Following notation of \cite[Section 3]{Duke} we let $\mathcal{F}_h^\ell$ be a connected component of the moduli space of $h$-polarised (or $h$-ample) hyper-K\"ahler manifolds of dimension $2n$ and with second Betti number $b_2=b+3$, with a full $\ell$-level structure. As in \cite[\S 3.7]{Duke} we furthermore denote by 
$\mathcal{F}_{\Sigma , h}^\ell$ the moduli space of $h$-ample $\Sigma$-polarised hyper-K\"ahler manifolds with full $\ell$-level structure. We have a natural forgetful map 
\begin{equation} \label{map}
\iota_{\Sigma} : \mathcal{F}_{\Sigma , h}^\ell \to \mathcal{F}_h^\ell.
\end{equation}

Let 
$$\pi_{h}^\ell : \mathcal{U}_h^\ell \to \mathcal{F}_h^\ell \quad \mbox{and} \quad \pi_{\Sigma, h}^\ell : \mathcal{U}_{\Sigma ,h}^\ell \to \mathcal{F}_{\Sigma , h}^\ell$$
be the corresponding universal families, let $r+1$ be the Picard number of the generic fiber of $\pi_h^\ell$, and let 
$$\mathcal{B}_\Sigma^\ell = \{ \mathcal{L}_0 , \ldots , \mathcal{L}_r \} \subset \mathrm{Pic}_\mathbb{Q} (\mathcal{U}_{\Sigma , h}^\ell )$$
be a collection of line bundles whose images in $\mathrm{Pic}_\mathbb{Q} (\mathcal{U}_{\Sigma , h}^\ell / \mathcal{F}_{\Sigma , h}^\ell )$ form a basis. 

We define the following subalgebras in $\mathrm{CH}^\bullet (\mathcal{F}_h^\ell )$:
\begin{itemize}
\item $\mathrm{NL}^\bullet  (\mathcal{F}_h^\ell )$ is the subalgebra generated by irreducible components of the images of the maps (\ref{map}) as one varies $\Sigma$;
\item the \emph{tautological ring} $\mathrm{R}^\bullet  (\mathcal{F}_h^\ell )$ is the subalgebra generated by the $\kappa$-cycles 
$$(\iota_{\Sigma} \circ \pi_{\Sigma, h}^\ell )_*  \left( \prod_{i=0}^r c_1 ( \mathcal{L}_i )^{a_i} \prod_{j=1}^{2n} c_j (T_{\pi_{\Sigma , h}^\ell})^{b_j} \right) ;$$
\item the \emph{special tautological ring} $\mathrm{DR}^\bullet (\mathcal{F}_h^\ell )$ is the subalgebra generated by the special $\kappa$-cycles 
$$(\iota_{\Sigma} \circ \pi_{\Sigma, h}^\ell )_*  \left( \prod_{i=0}^r c_1 ( \mathcal{L}_i )^{a_i}  \right).$$
\end{itemize}
We add the subscript \textit{hom} to denote the images of the the corresponding rings in $\rH^\bullet (\mathcal{F}_h^\ell , \mathbb{Q})$ via the cycle class map.  e.g. $\mathrm{CH}_{\rm hom}^k (\mathcal{F}_h^\ell )$ denotes the image of $\mathrm{CH}^k (\mathcal{F}_h^\ell )$ in $H^{2k} (\mathcal{F}_h^\ell , \mathbb{Q})$.

\begin{definition}
If $\mathcal{U} = \mathcal{U}_h^\ell$ or $\mathcal{U}_{\Sigma ,h}^\ell$, we will denote by $\rH^\bullet_{\leq d}( \mathcal{U} )$ the subalgebra of $\rH^\bullet(\mathcal{U},\QQ)$ generated by the classes in
$$\Hdg(\cU)^{2k}:=(W_{2k} \rH^{2k} (\mathcal{U} ))^{k,k} \quad \mbox{with } k \leq d.$$
when $d\leq 2n-1$. When $d=2n$, we define $\rH^\bullet_{\leq 2n}( \mathcal{U} )$ to be the subalgebra generated by $\rH^\bullet_{\leq 2n-1}( \mathcal{U} )$ and the relative Chern class $c_{2n}(\cT_\pi)$.  
\end{definition}

 Recall from \cite[\S 2.5]{Duke} that if $X$ is a hyper-K\"ahler manifold we denote by $\BV^\bullet(X)$ the subalgebra of $\mathrm{CH}^\bullet (X)$ generated by all divisors and Chern classes $c_i (T_X)$. Similarly we define the Beauville-Voisin ring 
$$\BV^\bullet_{\hom}(\cU)\subset \mathrm{CH}_{\rm hom}^\bullet (\mathcal{U})$$ 
to be the subalgebra generated by the cycle classes of $c_1 ( \mathcal{L}_i )$ and $ c_j (T_{\pi_{\Sigma , h}^\ell})$.  Note that  $\BV^\bullet_{\hom}(\cU)$ is a subring of $\rH^\bullet_{\leq 2n}(\cU)$. We finally define 
$$\widetilde{\rR}_{\hom}^\ast(\cF_h^\ell)\subseteq \rH^\bullet(\cF_h^\ell,\QQ)$$ 
to be the subring generated by all the pushforwards 
$$(\pi_{\Sigma, h}^\ell)_\ast( x) \quad \mbox{with} \quad x \in \rH^\bullet_{\leq 2n} (\mathcal{U}_{\Sigma ,h}^\ell),$$ 
where we let $\pi_{\Sigma, h}^\ell : \mathcal{U}_{\Sigma ,h}^\ell \to \mathcal{F}_{\Sigma , h}^\ell$ vary over all $\Sigma$. 
By definition, we have  
$$ \rD\rR^\bullet_{\hom}(\cF_h^\ell)\subseteq \rR^\ast_{\hom}(\cF_h^\ell)\subseteq \widetilde{\rR}_{\hom}(\cF_h^\ell).$$

\section{The Leray spectral sequence and cup products: a gap}

Following \cite[\S 8.2]{Duke} in this section we simply denote by $\pi: \cU\to \cF$ a universal family of lattice polarized hyperk\"ahler manifolds $\pi_{\Sigma, h}^\ell : \mathcal{U}_{\Sigma ,h}^\ell \to \mathcal{F}_{\Sigma , h}^\ell$. 

Consider the local systems $\mathbb{H}^j = R^j \pi_* \mathbb{Q}$. It follows from \cite[Theorem 1.1.1]{deCataldo} that there is a splitting of the Leray filtration in the category of mixed Hodge structure, the degeneration of the Leray spectral sequence for $\pi$ therefore gives an isomorphism of mixed Hodge structure 
\begin{equation} \label{E}
\rH^k (\mathcal{U} , \mathbb{Q} )  \cong \bigoplus_{i+j=k} \rH^i (\mathcal{F} , \mathbb{H}^j ).
\end{equation}
The cup product on the LHS of \eqref{E} induces a cup product on the Leray spectral sequence, and therefore a cup product on the RHS of \eqref{E}. However the isomorphism \eqref{E} does not preserve the ring structure; see e.g. \cite[Prop. 0.4 or Prop. 0.6]{Vo12}. 

In \cite[Theorem 8.2.1]{Duke}, we claimed that the two cup products on (\ref{E}) agree on the subalgebra  generated by low degree classes ($\mathrm{degree} < \frac12 \dim \cF$) up to removing some Noether-Lefschetz loci.  Based on this result, we proved  Theorem 8.3.1 and Theorem 4.3.1 in \cite{Duke}, which implies that
$$\rR_{\hom}^\bullet(\cF_h)=\NL^\bullet_{\hom}(\cF_h)$$
 when $n<\frac{b_2-3}{8}$. In particular, the cohomological tautological conjecture holds for K3 surfaces and $K3^{[2]}$-type hyperk\"ahler manifolds.  However, in our `proof', we only show that 

\begin{proposition} \label{P}
Let $\alpha_1$ and $\alpha_2$ be two classes in $\rH^\bullet (\mathcal{U},\QQ)$. Assume that the sum the degrees of $\alpha_1$ and $\alpha_2$ is $< \frac12 \dim \cF$, then 
\begin{equation}\label{E2}
\alpha_1 \wedge_{\rm LHS}  \alpha_2 = \alpha_1 \wedge_{\rm RHS}  \alpha_2,
\end{equation}
up to removing some Nother-Lefschetz components. 
\end{proposition}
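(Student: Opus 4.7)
The plan is to recover the multiplicative compatibility (\ref{E2}) in the two-class range originally targeted in \cite[\S 8.2]{Duke}, while acknowledging that the iteration to an entire subalgebra does not follow. Write $F^\bullet \rH^\bullet(\cU,\QQ)$ for the Leray filtration associated to $\pi$. By \cite[Theorem 1.1.1]{deCataldo} the splitting underlying (\ref{E}) identifies $\mathrm{Gr}_F^p \rH^k(\cU,\QQ)$ with $\rH^p(\cF,\HH^{k-p})$, and the RHS cup product in (\ref{E}) is, by construction, the cup product induced on the associated graded of $F^\bullet$. By bilinearity it suffices to treat the case where each $\alpha_i\in F^{p_i}\rH^{k_i}(\cU,\QQ)$ has pure Leray weight, i.e.\ projects to a class $\overline{\alpha}_i\in\rH^{p_i}(\cF,\HH^{k_i-p_i})$.

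The multiplicativity of the Leray filtration then forces $\alpha_1\wedge_{\rm LHS}\alpha_2\in F^{p_1+p_2}\rH^{k_1+k_2}(\cU,\QQ)$, with image in $\mathrm{Gr}_F^{p_1+p_2}$ equal to $\overline{\alpha}_1\cup\overline{\alpha}_2=\alpha_1\wedge_{\rm RHS}\alpha_2$. Hence the discrepancy
$$
\delta \;:=\; \alpha_1\wedge_{\rm LHS}\alpha_2 \;-\; \alpha_1\wedge_{\rm RHS}\alpha_2
$$
lies in $F^{p_1+p_2+1}\rH^{k_1+k_2}(\cU,\QQ)$ and decomposes, via (\ref{E}), into components in the groups $\rH^q(\cF,\HH^{k_1+k_2-q})$ for $q>p_1+p_2$.

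To kill $\delta$ at the cost of removing a Noether--Lefschetz locus, I would invoke the big monodromy of the local systems $\HH^j$ together with the Matsushima/Raghunathan-type computation of invariants already used in \cite[\S 6--7]{Duke}: in the range $k_1+k_2<\frac{1}{2}\dim\cF$, the relevant pieces $\rH^q(\cF,\HH^{k_1+k_2-q})$ with $q>p_1+p_2$ should be spanned, as Hodge classes, by cycles supported on the subvarieties of $\cF$ where the generic Picard rank jumps, i.e.\ by classes pulled back from NL components. Excising those components then annihilates $\delta$ and proves (\ref{E2}).

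The principal obstacle---and the reason this argument does not recover the subalgebra statement of \cite[Theorem 8.2.1]{Duke}---is that a triple product $(\alpha_1\wedge\alpha_2)\wedge\alpha_3$ would require the same dichotomy applied to a class whose degree $\deg\alpha_1+\deg\alpha_2$ may already have exceeded $\frac{1}{2}\dim\cF$. In that regime the monodromy-based vanishing step fails, and new correction terms may survive regardless of which NL components one removes. This is precisely the gap identified by Beckmann and Mauri.
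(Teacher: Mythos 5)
Your argument is essentially the paper's proof: the paper disposes of this in one line by citing the cohomological generalized Franchetta conjecture \cite[Theorem 8.1.1]{Duke}, whose content is exactly the step you need --- a Hodge class on $\cU$ of degree $< \tfrac12 \dim\cF$ lying in the positive part of the Leray filtration (equivalently, restricting to zero on the very general fiber, as your $\delta$ does) is supported on the Noether--Lefschetz locus. Your monodromy/Matsushima step is just an unwinding of the proof of that theorem; the only imprecision is that the spanning-by-special-cycles statement holds for the Hodge pieces $(W_{2q}\rH^{2q}(\cF,\HH^{j}))^{q,q}$ rather than for all of $\rH^{q}(\cF,\HH^{j})$, which is what is actually available and what suffices here since $\delta$ is a Hodge class.
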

\begin{proof}
This is a direct consequence of the cohomological generalized Franchetta Conjecture \cite[Theorem 8.1.1]{Duke}. 
\end{proof}
For the entire algebra $\rH^\bullet_{<\frac{1}{2}\dim \cF}(\cU)$, we actually do not know if \eqref{E2} holds (up to removing some NL loci). In other words, we don't know if the FKM ring is stable by the LHS ring structure in large degree.  

\section{The corrected cohomological tautological conjecture}

It remains open whether the two cup products coincides (up to some classes support on NL loci) on the entire subalgebra of $\CH^\bullet_{\rm hom} (\mathcal{U})$ generated by cycles of small co-dimension. This causes a problem in the proof of \cite[Theorem 8.3.1]{Duke} and hence of \cite[Theorem 4.3.1]{Duke}. 

As we will show in the next sections we can however prove the following theorem. The bound on $n$ is weaker but it is good enough to apply to $K3$ surfaces and $K3^{[2]}$-type hyper-K\"ahler manifolds. 

Recall that $\pi_{h}^\ell : \mathcal{U}_{h}^\ell \to \mathcal{F}_{h}^\ell$ denotes a smooth family of $h$-polarized hyperk\"ahler manifolds over an irreducible quasi-projective variety  $\cF_h^\ell$ of dimension $b \geq 3$. We define the following conditions  to state our main result:  

\medskip

\begin{quote}
{$\mathbf{(\ast )}$} \quad For every lattice-polarized universal family $\cU^\ell_{\Sigma,h}\to \cF^\ell_{\Sigma,h}$, the group of Hodge classes of {\bf degree $\mathbf{2n}$} of the very general fiber is spanned by Beauville-Voisin classes.    
\end{quote}
 
\medskip

\begin{quote}  
$\mathbf{(\ast \ast)}$ \quad   For every lattice-polarized universal family $\cU^\ell_{\Sigma,h}\to \cF^\ell_{\Sigma,h}$, the group of Hodge classes of {\bf degree $\mathbf{\leq 2n}$} of the very general fiber is spanned by Beauville-Voisin classes.
\end{quote}

\medskip

Then we have 
\begin{theorem}\label{tauthm}
Suppose  $b \geq \sup \{16n-12, 12n-4\}$. Then $$\mathrm{DR}^\bullet_{\hom} (\cF_h^\ell)=  \NL_{\hom}^\bullet(\cF_h^\ell).$$
Moreover, we have  $\rR^\bullet_{\hom} (\cF_h^\ell)=  \NL_{\hom}^\bullet(\cF_h^\ell)$ if  the condition $(\ast)$ holds and 
\begin{equation}\label{tauconj}
    \mathrm{NL}_{\hom}^\bullet (\mathcal{F}_h^\ell ) =\rR^\bullet_{\rm hom} (\mathcal{F}_h^\ell )= \widetilde{\rR}^\bullet_{\rm hom} (\mathcal{F}_h^\ell ),
\end{equation}
if either $b\geq \sup \{16n-8, 12n-2\}$ or the condition $(\ast\ast)$ holds. 
In particular, the cohomological tautological conjecture  holds for both $K3$ surfaces and $K3^{[2]}$-type hyper-K\"ahler manifolds. 
\end{theorem}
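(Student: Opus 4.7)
The strategy is to revisit the arguments of \cite[\S 8]{Duke}, substituting Proposition~\ref{P} for the cup-product compatibility used there, and tracking carefully the range of degrees in which the substitution is valid. Write $\pi = \pi_{\Sigma,h}^\ell : \cU \to \cF$ and $\iota = \iota_\Sigma$ for a lattice-polarized universal family and its forgetful map, and set $b = \dim \cF_h^\ell$. By the generalized Franchetta theorem \cite[Theorem 8.1.1]{Duke} (valid once $b \gtrsim 12n$), every line bundle on $\cU$ has a Leray-compatible decomposition, modulo classes supported on NL components,
$$c_1(\cL_i) \equiv \pi^{*} D_i + \mu_i, \qquad D_i \in \NL^1(\cF),\ \mu_i \in \rH^0(\cF, \HH^2),$$
with $\mu_i$ restricting to a BV divisor on each fiber. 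Plugging this into a special $\kappa$-cycle $\kappa = (\iota \circ \pi)_{*}\big(\prod_i (\pi^{*} D_i + \mu_i)^{a_i}\big)$ and applying the projection formula reduces the calculation to $\pi_{*}$ of monomials in the $\mu_i$'s weighted by products of the $D_i$'s. Only monomials of fiber-degree $4n$ survive, and each evaluates to a BV intersection number on the very general fiber; hence $\kappa$ is a $\QQ$-linear combination of BV scalars times classes in $\NL^\bullet_{\hom}(\cF_h^\ell)$, and therefore lies in $\NL^\bullet_{\hom}(\cF_h^\ell)$.

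The delicate point is that the expansion above uses the LHS cup product on $\cU$, while the subsequent bookkeeping uses the Leray cup product on the RHS of \eqref{E}. Proposition~\ref{P} allows us to interchange the two, modulo NL, provided the total $\cU$-degree of the monomial is strictly smaller than $b/2$. Since a $\kappa$-cycle of $\cF$-codimension $k$ comes from a $\cU$-monomial of degree $2(k+2n)$, the direct argument yields $\mathrm{DR}^k_{\hom} \subseteq \NL^k_{\hom}$ for $k < (b-8n)/4$; the reverse inclusion $\NL^\bullet_{\hom} \subseteq \mathrm{DR}^\bullet_{\hom}$ is straightforward via the projection formula applied to suitable monomials on the lattice-polarized families. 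Matching the range of $k$ needed to exhaust every generator of $\mathrm{DR}$ with the hypothesis of \cite[Theorem 8.1.1]{Duke} produces the bound $b \geq \sup\{16n-12,\, 12n-4\}$ stated.

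For the tautological ring $\rR$, the same scheme applies once each Chern class $c_j(T_\pi)$ admits a Leray-compatible decomposition of the shape above; condition $(\ast)$ supplies exactly this, identifying the fiberwise top-degree Hodge classes with BV classes that globalize via a deformation/monodromy argument. For $\widetilde{\rR}$, one needs the analogous decomposition for every Hodge class of fiber-degree $\leq 2n$, which is condition $(\ast\ast)$; in its absence, the sharper bound $b \geq \sup\{16n-8,\, 12n-2\}$ widens the range in Proposition~\ref{P} enough to absorb directly the extra generators of $\widetilde{\rR}$. The specialization to $K3$ surfaces ($n=1$, $b=19$) and to $K3^{[2]}$-type ($n=2$, $b=20$) then follows since the numerical bounds are satisfied and $(\ast\ast)$ is classical in these cases.

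I expect the principal obstacle to lie in the repeated invocation of Proposition~\ref{P}: each cross term in the expansion of a $\kappa$-cycle must be shown individually either to have total $\cU$-degree $<b/2$ (so that Proposition~\ref{P} applies) or to belong to NL for a trivial reason. The two inequalities defining the hypothesis on $b$ are precisely the thresholds at which no uncontrolled cross term survives in the computation of generators of $\mathrm{DR}$, respectively of $\rR$ and $\widetilde{\rR}$.
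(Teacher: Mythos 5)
Your proposal is, in essence, the original argument of \cite[\S 8.2--8.3]{Duke} patched by Proposition~\ref{P}, and this is exactly the route that the erratum explains cannot be completed. Proposition~\ref{P} only controls a product whose \emph{total} degree is $<\tfrac12\dim\cF$; by your own bookkeeping, a special $\kappa$-class of codimension $k$ comes from a monomial on $\cU$ of degree $2(k+2n)$, so the substitution only yields $\mathrm{DR}^k_{\hom}\subseteq\NL^k_{\hom}$ for $k<(b-8n)/4$. That is $k\leq 2$ for $K3$ surfaces and $k=0$ for $K3^{[2]}$-type, while $\mathrm{DR}^\bullet$ has generators in every codimension up to $b$; Theorem~\ref{NLconj}(i) only covers $k<\tfrac{b+1}{3}$ or $k>\tfrac{2b-1}{3}$, so an entire middle range of codimensions is left untouched, and no hypothesis of the form $b\geq Cn$ closes it. Your closing remark that one must check ``each cross term individually'' is precisely the unresolved gap (the FKM ring is not known to be stable under the LHS product in large degree), not a step one can carry out.

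The paper's proof runs on a different engine, none of which appears in your proposal. First, the relative Hard Lefschetz theorem (Lemma~\ref{LLL}) trades an arbitrary fiberwise Hodge class of fiber degree in $]n,2n]$ for $\cL^{2k-2n}\beta$ with $\cL$ relatively ample and $\beta$ of low fiber degree, up to a class that vanishes on the very general fiber; Theorem~\ref{NLconj}(ii) then places that error on Noether--Lefschetz loci of corank-one larger lattices. Iterating (Theorem~\ref{induction1}) and descending through the NL stratification --- a double induction on the codimension $k$ (Lemma~\ref{Lend1}) and on $\dim\cF_{\Sigma,h}^\ell$ --- one reduces to lattice-polarized families with $\dim\cF_{\Sigma,h}^\ell\leq\sup\{8n-4,6n-1\}\leq\tfrac{b}{2}$, where Corollary~\ref{NLpart} shows that \emph{every} pushforward lands in $\NL^\bullet_{\hom}(\cF_h^\ell)$; this is what handles the middle codimensions and is the actual source of the bounds $\sup\{16n-12,12n-4\}$ and $\sup\{16n-8,12n-2\}$ (plus the borderline analysis of Lemma~\ref{L:T8refined} for $\dim\cF=8n-5,8n-4$, needed for $K3^{[2]}$ where $b=20<24$). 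Finally, for $K3^{[2]}$-type the stronger numerical hypothesis fails, so condition $(\ast)$ must be verified explicitly via the decomposition of $\mathrm{Sym}^2(\HH^2)$ under $\mathrm{O}(\Sigma^\perp)$; it is not enough to call $(\ast\ast)$ ``classical.''
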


\section{Proof of Theorem \ref{tauthm}}

Before embarquing into the proof we first collect some results that quickly follow from theorems proved in \cite{Duke}. 

\subsection{Noether-Lefschetz locus}

\begin{definition}
Let $\alpha$ be a class in $\Hdg^{2k} (\cU^\ell_{\Sigma,h} )$. We say that $\alpha$ is \emph{supported on the Noether-Lefschetz locus} if there exist finitely many lattices $\Sigma_j \supset \Sigma$ with $\rank(\Sigma_j)=\rank (\Sigma)+1$ and classes $\gamma_j \in \Hdg^{2k-2} (\cU_{\Sigma_j ,h }^\ell )$ such that 
$$\alpha =\sum\limits_{j} r_j (\rho_j)_\ast (\gamma_j) ,$$
where $r_j \in \mathbb{Q}$ and $\rho_j$ denotes the map $\cU^\ell_{\Sigma_j ,h} \to \cU^\ell_{\Sigma,h}$ corresponding to the forgetful map $\mathcal{F}_{\Sigma_j , h}^\ell \to \mathcal{F}_{\Sigma , h}^\ell$. 
\end{definition}

Let $d = \dim \mathcal{F}_{\Sigma , h }^\ell$. The main results in \cite{BLMM16} and \cite{Duke} imply the following

\begin{theorem} \label{NLconj}
\begin{enumerate}
    \item [(i)] For every degree  $k<\frac{d+1}{3}$ or $k>\frac{2d-1}{3}$, we have 
     $$ \Hdg^{2k}(\cF_{\Sigma, h}^\ell )= \mathrm{NL}^{k}_{\rm hom}(\cF_{\Sigma, h}^\ell).$$
	\item [(ii)] Let $\alpha \in \Hdg^{2k} (\cU^\ell_{\Sigma,h} )$ with $k<\min \left\{ \frac{d}{4}+1 , \frac{d+1}{3} \right\}.$ 
If the restriction of $\alpha$ to the very general fiber of $\pi_{\Sigma,h}^\ell$ is zero, then $\alpha$ is supported on the Noether-Lefschetz locus. 
\end{enumerate}
\end{theorem}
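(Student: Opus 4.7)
The proof splits into two independent parts.

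\emph{Part (i).} I would observe that $\cF_{\Sigma, h}^\ell$ is a connected component of an arithmetic quotient of the Hermitian symmetric domain of type IV attached to the lattice $\Sigma^\perp \cap h^\perp$ of signature $(2, d)$. The main theorem of \cite{BLMM16} establishes the Hodge conjecture for such orthogonal Shimura varieties in the low-degree range $k < (d+1)/3$: every rational Hodge class of degree $2k$ is a $\QQ$-linear combination of classes of special cycles. By construction, the irreducible components of these special cycles are precisely the images of the forgetful maps from higher-rank polarised moduli strata $\cF_{\Sigma', h}^\ell$ with $\Sigma' \supsetneq \Sigma$, so their $\QQ$-span coincides with $\NL^k_{\hom}(\cF_{\Sigma, h}^\ell)$. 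The high-degree range $k > (2d-1)/3$ then follows by Hard Lefschetz / Poincar\'e duality on this Shimura variety.

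\emph{Part (ii).} The plan is to analyse $\alpha$ through the Leray spectral sequence for $\pi = \pi^\ell_{\Sigma, h} : \cU \to \cF$, which by \cite[Theorem 1.1.1]{deCataldo} degenerates at $E_2$ and splits in the category of mixed Hodge structures:
$$\rH^{2k}(\cU, \QQ) \;\cong\; \bigoplus_{i+j = 2k} \rH^i(\cF, R^j \pi_* \QQ).$$
The vanishing of $\alpha$ on the very general fiber is equivalent to the vanishing of the Leray piece $\alpha_0 \in \rH^0(\cF, R^{2k}\pi_*\QQ)$, so we may write $\alpha = \sum_{i \geq 1} \alpha_i$ with $\alpha_i \in \rH^i(\cF, R^{2k-i}\pi_* \QQ)^{k,k}$. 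I would then decompose each local system $R^{j}\pi_* \QQ$ into isotypic components under the Looijenga--Lunts--Verbitsky algebra action. On the trivial (LLV-invariant) component --- which is generated fiberwise by products of the classes $c_1(\mathcal{L}_i)$ and $c_j(\cT_\pi)$ --- the piece $\alpha_i$ is a finite sum of terms of the form (Hodge class on $\cF$) $\cup$ (Beauville--Voisin class on the fibers). Applying part (i) to the $\cF$-factor, valid because $i \leq 2k < (d+1)/3$, realises this contribution as a class supported on the NL locus of $\cU$.

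The non-trivial LLV-isotypic pieces are handled using the extension of the BLMM argument to cohomology of orthogonal Shimura varieties with non-trivial local system coefficients; the relevant Vogan--Zuckerman $(\mathfrak{g}, K)$-cohomological input is available in the range $i < d/4 + 1$. In that range each such $\alpha_i$ is expressed as a pushforward from an NL stratum tensored with a Hodge class on the fibers, and the cohomological generalised Franchetta conjecture \cite[Theorem 8.1.1]{Duke} rewrites the Hodge-on-fiber factor modulo classes supported on NL loci. Combining the two thresholds yields the stated bound $k < \min\{(d+1)/3,\, d/4 + 1\}$. I expect the main technical obstacle to be the precise identification of each LLV-isotypic component of $R^j\pi_* \QQ$ with a cohomology group to which the BLMM theorem with non-trivial coefficients applies; this matching is the heart of the computation in \cite{Duke}, which the present proof essentially repackages.
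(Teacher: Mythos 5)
Your architecture for (ii) coincides with the paper's: split the Leray spectral sequence via de Cataldo's theorem, note that vanishing on the very general fibre kills the $\rH^0(\cF,\mathbb{H}^{2k})$ piece, treat the pieces $\rH^{2i}(\cF,\mathbb{H}^{2(k-i)})$ with $1\le i<k$ by the ``special cycles with non-trivial local coefficients span everything'' statement (valid for $i<d/4$, which is where the hypothesis $k<\tfrac{d}{4}+1$ enters), and treat the piece $i=k$, where the coefficient system is trivial, by part (i) (which is where $k<\tfrac{d+1}{3}$ enters). Part (i) itself is, in both your account and the paper's, just the main theorem of \cite{BLMM16} (the paper cites it for both degree ranges, so there is no need for your Hard Lefschetz/Poincar\'e duality detour, which would anyway require justification on a non-compact variety).

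There is, however, one step you elide, and it is the step carrying the real content of the proof. The coefficient theorem outputs that each graded piece of $\alpha$ is a combination of \emph{decorated special cycle classes} $(\iota_{\Sigma',\Sigma})_*\left([\cF_{\Sigma',h}^\ell]\otimes\mathbf{v}\right)$, which are elements of the Leray-graded group $\rH^{2i}(\cF,\mathbb{H}^{2a})$. But ``supported on the Noether--Lefschetz locus'' is \emph{defined} as an identity $\alpha=\sum_j r_j(\rho_j)_*(\gamma_j)$ in $\rH^{2k}(\cU)$ with honest classes $\gamma_j\in\Hdg^{2k-2}(\cU_{\Sigma_j})$, not as membership of graded pieces sitting over NL strata. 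To pass from one to the other you must produce an actual class $\gamma$ on $\cU_{\Sigma',h}^\ell$ whose Leray decomposition equals $[\cF_{\Sigma',h}^\ell]\otimes\mathbf{v}$ up to components in higher Leray degree, and then control those error components. The paper does this by induction on $d$: the parallel vector $\mathbf{v}$ extends to a Hodge class $\gamma$ on the smaller family, and the inductive hypothesis applied to the difference (which vanishes on the very general fibre of that family) shows the error is itself NL-supported. Your appeal to the generalised Franchetta conjecture only tells you that $\mathbf{v}$ is the fibre restriction of a Beauville--Voisin class; such a class (e.g.\ a product $\cL_1\cL_2$ of line bundles) still has non-trivial components in positive Leray degree, so exactly the same comparison problem recurs and is not resolved. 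You need the induction on $d$, or an equivalent device, to close this.
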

\begin{proof} The first part is precisely the main result of \cite{BLMM16}.

The second assertion follows from \cite[Theorem 6.4.1]{Duke} and \cite[\S 6.2]{Duke}. In fact letting $E$ be a finite dimensional $\SO(2,d;\RR)$-representation and $\EE$ be the associated local system on $\cF_{\Sigma,h}^\ell$, it is proved there that 
\begin{quote}
$\left( \dagger \right)$ \quad \ for all $i<\frac{d}{4}$, the space $(W_{2i} H^{2i}(\cF_{\Sigma,h}^\ell , \mathbb{E}))^{i,i} $ is spanned \\ \phantom{espac} by decorated special cycles in $\mathrm{SC}^i (\cF_{\Sigma,h}^\ell , \mathbb{E})$ 
\end{quote} 
and, by definition, special cycle classes are generated by pushforwards of decorated fundamental classes of $\mathcal{F}_{\Sigma' , h}^\ell$, where $\Sigma'$ is a lattice containing $\Sigma$ and the pushforward is with respect to the forgetful map $\mathcal{F}_{\Sigma' , h}^\ell \to \mathcal{F}_{\Sigma , h}^\ell$. 

We now proceed as in the proof of \cite[Theorem 8.1.1]{Duke}: under the hypotheses of the theorem, the class  $\alpha$ belongs to 
$$\bigoplus_{i=1}^k (W_{2i} H^{2i}(\cF_{\Sigma,h}^\ell , \mathbb{E}))^{i,i}.$$
If $i < k$ then by hypothesis we have $i < \frac{d}{4}$ and $\left( \dagger \right)$ applied to $\mathbb{E} = \mathbb{H}^{2(k-i)}$ implies that the component of $\alpha$ in $(W_{2i} H^{2i}(\cF_{\Sigma,h}^\ell , \mathbb{E}))^{i,i} $ is a linear combination of decorated Noether-Lefschetz cycle classes. Finally if $i=k$ the local system $\mathbb{H}^{2(k-i)}$ is trivial and we can similarly apply (i) to conclude that all the components of $\alpha$ can be decomposed as linear combinations of decorated Noether-Lefschetz cycle classes. 

We are therefore reduced to proving that a decorated Noether-Lefschetz cycle class represents a class in $H^\bullet (\cU^\ell_{\Sigma,h} )$ that is supported on the Noether-Lefschetz locus. To do so consider a forgetful map
$$\iota_{\Sigma' , \Sigma} : \mathcal{F}_{\Sigma' , h}^\ell \to \mathcal{F}_{\Sigma , h}^\ell \quad (\mbox{with } \Sigma \subset \Sigma' \mbox{ and } \rank(\Sigma')=\rank (\Sigma)+i),$$
a parallel vector $\mathbf{v}$ in a local system $\mathbb{H}^{2a}$, and the corresponding cycle class 
$$(\iota_{\Sigma' , \Sigma})_* \left( [\mathcal{F}_{\Sigma' , h}^\ell ] \otimes \mathbf{v} \right) \in H^{2i} ( \mathcal{F}_{\Sigma , h}^\ell , \mathbb{H}^{2a} ) \subset H^{2(i+a)} (\cU^\ell_{\Sigma,h} ).$$
The parallel vector $\mathbf{v}$ corresponds to a class $\gamma$ in $\Hdg^{2a} (\cU^\ell_{\Sigma' ,h})$ whose restriction to the very general fiber is precisely $\mathbf{v}$. 

By induction on $d$ and up to classes supported on the Noether-Lefschetz locus of $\cU^\ell_{\Sigma' ,h}$ the class $\gamma$ is equal to  
$$ [\mathcal{F}_{\Sigma' , h}^\ell ] \otimes \mathbf{v} \in H^0 (\mathcal{F}_{\Sigma' , h}^\ell , \mathbb{H}^a )$$
in any given decomposition \eqref{E} of $H^\bullet (\cU^\ell_{\Sigma' ,h})$. Since $(\iota_{\Sigma' , \Sigma})_* \gamma$ is obviously supported on the Noether-Lefschetz locus, it follows that  
$$(\iota_{\Sigma' , \Sigma})_* \left( [\mathcal{F}_{\Sigma' , h}^\ell ] \otimes \mathbf{v} \right)$$
and therefore $\alpha$ are also supported on the Noether-Lefschetz locus. 
\end{proof}

Another key result is the following 
\begin{corollary}\label{NLpart}
Assume that $d < \frac{b}{2}+1$. Then for any $\alpha\in \CH^k(\cF_{\Sigma, h}^\ell )$, the cohomology class $(\iota_\Sigma )_\ast [\alpha]$ is lying in $\NL^{k+b-d}_{\hom} (\cF_h^\ell )$. 
\end{corollary}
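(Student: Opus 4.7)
The plan is to split on the value of $k$ and apply Theorem \ref{NLconj}(i) to two different spaces depending on the case.

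First, consider the low-codimension range $k<(d+1)/3$. Theorem \ref{NLconj}(i) applied to $\cF_{\Sigma,h}^\ell$ (which has dimension $d$) immediately gives $[\alpha]\in\NL^k_{\hom}(\cF_{\Sigma,h}^\ell)$, so that $[\alpha]$ can be written as a polynomial expression in the fundamental classes of NL sub-loci $\cF_{\Sigma',h}^\ell\subset\cF_{\Sigma,h}^\ell$ attached to lattices $\Sigma'\supsetneq\Sigma$. Using the functoriality relation $\iota_\Sigma\circ\iota_{\Sigma',\Sigma}=\iota_{\Sigma'}$, together with the structural fact that a proper intersection of NL loci is itself an NL locus (attached to the lattice generated by the relevant $\Sigma'$'s), I would verify that $(\iota_\Sigma)_\ast$ carries this polynomial expression to a polynomial expression in fundamental classes of NL loci inside $\cF_h^\ell$, landing in $\NL^{k+b-d}_{\hom}(\cF_h^\ell)$.

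Next, consider the complementary range $k\geq (d+1)/3$. The hypothesis $d<\tfrac{b}{2}+1$ is equivalent to the inequality $(d+1)/3>d-(b+1)/3$, so in this case $k>d-(b+1)/3$ and hence $k+b-d>(2b-1)/3$. Since pushforward by a proper morphism sends algebraic classes to algebraic classes, $(\iota_\Sigma)_\ast[\alpha]$ defines a Hodge class in $\Hdg^{2(k+b-d)}(\cF_h^\ell)$. Applying Theorem \ref{NLconj}(i) now to $\cF_h^\ell$ itself (of dimension $b$) in the high-codimension range $k+b-d>(2b-1)/3$ gives $\Hdg^{2(k+b-d)}(\cF_h^\ell)=\NL^{k+b-d}_{\hom}(\cF_h^\ell)$, completing this case.

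The hypothesis $d<b/2+1$ is used precisely to make the two ranges $k<(d+1)/3$ and $k>d-(b+1)/3$ cover every $k\in\{0,1,\ldots,d\}$. I expect the main obstacle to be in the first case: one must check that the pushforward of an arbitrary polynomial in fundamental classes of NL loci stays inside the NL \emph{subalgebra} of $\cF_h^\ell$, not merely in the larger subgroup of classes supported on NL loci. The cleanest way to handle this is probably by induction on $d$, systematically replacing monomials of degree $\geq 2$ by fundamental classes of deeper NL loci (via the proper-intersection identification above) and then invoking the inductive hypothesis to push forward the resulting classes on the deeper loci.
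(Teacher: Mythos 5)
Your proposal is correct and follows essentially the same route as the paper: both arguments split into the two complementary ranges (low codimension on $\cF_{\Sigma,h}^\ell$, where Theorem \ref{NLconj}(i) applies to $\cF_{\Sigma,h}^\ell$ and one pushes forward using $(\iota_\Sigma)_\ast(\NL^\bullet(\cF_{\Sigma,h}^\ell))\subseteq\NL^\bullet(\cF_h^\ell)$, versus high codimension $k+b-d>\frac{2b-1}{3}$ on $\cF_h^\ell$, where Theorem \ref{NLconj}(i) applies directly to $\cF_h^\ell$), with the hypothesis $d<\frac{b}{2}+1$ serving exactly to make the two ranges cover all $k$. The paper simply invokes the inclusion $(\iota_\Sigma)_\ast(\NL^\bullet(\cF_{\Sigma,h}^\ell))\subseteq\NL^\bullet(\cF_h^\ell)$ as a known structural fact rather than elaborating the intersection-of-NL-loci argument you sketch at the end.
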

\begin{proof}

By assumption, the class $[\alpha]$ belongs to $\Hdg^{2k} (\cF_{\Sigma , h}^\ell )$ and its pushforward image  $$(\iota_\Sigma )_\ast [\alpha] \in \Hdg^{2(k+b-d)}(\cF_h^\ell).$$
We then have distinguish two cases.
\begin{enumerate}
    \item  If $k+b-d>\frac{2b-1}{3}$,   it follows from Theorem \ref{NLconj} (i), applied to $\cF_h^\ell$, that $(\iota_\Sigma )_\ast [\alpha ]$ lies in $\NL_{\hom}^{k+b-d}(\cF_h^\ell)$.  
    \item  If  $k+b-d \leq \frac{2b-1}{3}$, then $k\leq d-\frac{b+1}{3} <\frac{d+1}{3}$ and it follows from Theorem \ref{NLconj} (i), applied to $\cF_{\Sigma , h}^\ell$, that $[\alpha] \in \NL^k_{\hom}(\cF_{\Sigma , h}^\ell )$. The assertion then follows from the fact that $(\iota_\Sigma )_\ast (\NL^\bullet (\cF_{\Sigma , h}^\ell ))\subseteq \NL^\bullet (\cF_h^\ell )$. 
\end{enumerate}
\end{proof}

\subsection{Inductive step}
Throughout this subsection, we simply denote by $\pi:\cU\to \cF$ the universal family $\cU_{\Sigma,h}^\ell\to \cF_{\Sigma,h}^\ell$. 
We make use of Theorem \ref{NLconj}(ii) to investigate the difference between the  rings $$\mathrm{DCH}^\bullet_{\hom}(\cU)\subseteq  \BV^\bullet_{\hom}(\cU)\subseteq \rH^\bullet_{\leq 2n}(\cU).$$ The following result shows that when $d=\dim \cF$ is large enough,  they only differ by some classes supported on the Noether-Lefschetz loci. 

 \begin{theorem}\label{induction1}
 Let $\alpha=\prod\limits_{i}\alpha_i \in \rH_{\leq 2n}^{2k}(\cU)$ with $k\geq 2n$, where each $\alpha_i$ belongs to $\Hdg^{2k_i} (\mathcal{U})$  with $k_i\leq 2n$ and  $\alpha_i=c_{2n}(T_\pi)$ if $k_i=2n$. Suppose one of the following conditions holds
 \begin{enumerate}
     \item $\dim \cF\geq \sup \{8n-3,6n \}$, or
     \item $\dim \cF\geq \sup \{8n-7 , 6n-3 \}$, $k_i<2n$ for all $i$, and  $k_{i_0}\neq n$ for some $i_0$.
 \end{enumerate} 
 Then, there exists $\beta \in \mathrm{DCH}^{2k}_{\hom}(\cU)$   such that 
 \begin{equation}\label{NLsupport}
     \alpha-\beta=\sum\limits_{j} r_j (\rho_j )_\ast (\gamma_j ),  
     \end{equation} with $r_j \in \mathbb{Q}$ and $\gamma_j \in H^{2k-2}_{\leq 2n-1}(\cU_{\Sigma_j })$ for some lattices  $\Sigma_j \supset \Sigma$ with $\rank(\Sigma_j)=\rank (\Sigma)+1$. 
     
     Moreover, in Case (1), $\beta$ can be chosen of the form $a\cL^{k}$ for some relative ample line bundle $\cL$, while in Case (2), $\beta$ can be chosen of the form $\cL^{k-1}\cL'$  for some $\cL,\cL'\in \Pic(\cU)$ with $\cL$ relative ample.
\end{theorem}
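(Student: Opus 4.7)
The plan is to restrict $\alpha$ to a very general fiber $X$ of $\pi$, construct $\beta\in\mathrm{DCH}^{2k}_{\hom}(\cU)$ in the prescribed form matching $\alpha|_X$, and apply Theorem~\ref{NLconj}(ii) to conclude that $\alpha-\beta$ is Noether-Lefschetz-supported with the refined coefficients $\gamma_j\in\rH^{2k-2}_{\leq 2n-1}(\cU_{\Sigma_j})$. Since $\dim_\CC X=2n$, one has $\alpha|_X=0$ whenever $k>2n$ and $\alpha|_X=\lambda[\mathrm{pt}_X]$ for some $\lambda\in\QQ$ when $k=2n$. Fix a relatively ample $\cL$ on $\cU$ with $c_1(\cL)^{2n}|_X=q[\mathrm{pt}_X]$, $q=\int_X c_1(\cL)^{2n}>0$; then set $\beta=(\lambda/q)\cL^k$ in Case~(1), and $\beta=\cL^{k-1}\cL'$ in Case~(2), choosing $\cL'\in\mathrm{Pic}(\cU)_\QQ$ so that $\int_X c_1(\cL)^{2n-1}c_1(\cL')=\lambda$ when $k=2n$, or $\cL'=0$ when $k>2n$. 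In either case $\alpha-\beta$ is a Hodge class of degree $2k$ vanishing on the very general fiber.

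Next, I apply Theorem~\ref{NLconj}(ii) to $\alpha-\beta$. Its hypothesis $k<\min\{d/4+1,(d+1)/3\}$ at $k=2n$ yields exactly the Case~(1) threshold $d\geq\sup\{8n-3,6n\}$, and at $k=2n-1$ yields exactly the Case~(2) threshold $d\geq\sup\{8n-7,6n-3\}$. In these regimes the theorem produces $\alpha-\beta=\sum_jr_j(\rho_j)_\ast\gamma_j$ with $\gamma_j\in\Hdg^{2k-2}(\cU_{\Sigma_j})$, and since any single Hodge class of degree $\leq 2(2n-1)$ is tautologically a generator of $\rH^{2k-2}_{\leq 2n-1}$, the refinement is automatic.

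The remaining ranges --- $k>2n$ in either case, and $k=2n$ in Case~(2) --- are handled by reduction to the regimes above. For $k>2n$, I first apply the $k=2n$ statement to the single class $c_{2n}(T_\pi)$, writing $c_{2n}(T_\pi)=a_0\cL^{2n}+\sum_jr_j(\rho_j)_\ast\gamma_j^{(0)}$ with $\gamma_j^{(0)}\in\rH^{4n-2}_{\leq 2n-1}$, and substitute this identity into every $c_{2n}(T_\pi)$-factor appearing in $\alpha$; the projection formula $(\rho_j)_\ast(\gamma_j^{(0)}\cdot\rho_j^\ast\eta)$ then absorbs the remaining factors into the pushforwards, reducing the problem to the purely divisorial setting $\alpha\in\rH^{2k}_{\leq 2n-1}(\cU)$. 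There I iterate: extract a subproduct of $2n$ factors, apply the $k=2n$ case to it, and push the complementary factors through the resulting pushforwards by the projection formula. For Case~(2) with $k=2n$, the hypothesis $k_{i_0}\neq n$ lets me split $\alpha=\alpha_{i_0}\cdot\alpha^{\mathrm{res}}$ at a factor of degree $2k_{i_0}\in\{2,\ldots,4n-2\}\setminus\{2n\}$, and apply the Kudla-Millson special cycle decomposition $(\dagger)$ from the proof of Theorem~\ref{NLconj}(ii) at the strictly smaller Leray level $k_{i_0}<2n$, where the weaker bound $d\geq\sup\{8n-7,6n-3\}$ suffices.

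\textbf{Main obstacle.} The principal technical difficulty is to guarantee that the residual $\gamma_j$ lie in the smaller ring $\rH^\bullet_{\leq 2n-1}$ rather than merely in $\Hdg^\bullet$. This is automatic for $k=2n$ because single Hodge classes of degree $2(2n-1)$ are themselves generators of $\rH_{\leq 2n-1}$; for $k>2n$ it is secured by the substitution-plus-projection-formula trick above, which crucially uses that the $k=2n$ resolution of $c_{2n}(T_\pi)$ is already of the prescribed form, thereby flushing out the one obstruction to staying in $\rH^\bullet_{\leq 2n-1}$. The condition $k_{i_0}\neq n$ in Case~(2) is precisely what avoids the middle-cohomology degeneracy of the Kudla-Millson theta correspondence underlying $(\dagger)$.
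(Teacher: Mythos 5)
Your first step --- matching $\alpha$ on the very general fibre by a multiple of $\cL^{k}$ (or by $\cL^{k-1}\cL'$) and feeding the difference to Theorem \ref{NLconj}(ii) --- is the paper's argument in the easiest sub-case, namely Case (1) with $k=2n$, and your numerology for the two thresholds is correct. But both reductions you propose for the remaining ranges have genuine gaps, and they fail for the same underlying reason: you only ever compare classes with powers of $\cL$ at the top fibre degree, whereas the proof needs the \emph{relative Hard Lefschetz isomorphism} $H^0(\cF,\HH^{2k'})\cong H^0(\cF,\HH^{4n-2k'})$ at every intermediate half-degree $k'\in\,]n,2n]$. Concretely: (a) for $k>2n$, your iteration ``extract a subproduct of total degree $4n$ and apply the $k=2n$ case'' is not available, because a subproduct of the $\alpha_i$ with $\sum k_i$ exactly $2n$ need not exist (take all $k_i=2n-1$, or three factors of half-degree $n+1$, for $n\geq 2$); moreover in Case (2) the bound $\dim\cF\geq\sup\{8n-7,6n-3\}$ does not even supply the $k=2n$ base case you invoke, since Theorem \ref{NLconj}(ii) then only covers half-degree $\leq 2n-1$. (b) In Case (2) with $k=2n$, the class $\alpha-\beta$ vanishes on the very general fibre but has Leray components in $H^{2i}(\cF,\HH^{2(2n-i)})$ for $i$ up to $2n$; factoring $\alpha$ as $\alpha_{i_0}\cdot\alpha^{\mathrm{res}}$ does not lower these levels, so ``applying $(\dagger)$ at level $k_{i_0}$'' does not engage with the actual obstruction --- the level-$(2n-1)$ and level-$2n$ components still require $\dim\cF\geq 8n-3$ and $\dim\cF\geq 6n$ respectively, which Case (2) does not provide.

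The paper's proof instead isolates Lemma \ref{LLL}: for a Hodge class of half-degree $k'\in\,]n,2n]$ with $k'<\delta=\min\{d/4+1,(d+1)/3\}$, relative Hard Lefschetz produces $\cL^{2k'-2n}\beta$ with $\beta$ of half-degree $2n-k'<n$ agreeing with it on every fibre, and Theorem \ref{NLconj}(ii), now applied within its legitimate range, makes the difference NL-supported with coefficients that are single Hodge classes of half-degree $k'-1\leq 2n-1$ (whence the refinement $\gamma_j\in\rH^{\bullet}_{\leq 2n-1}$). One then greedily groups the factors $\alpha_i$ into subproducts of half-degree in $]n,2n]$ (Case (1)) or $]n,2n-1]$ (Case (2)) and iterates; every invocation of Theorem \ref{NLconj}(ii) stays at half-degree $\leq 2n$ (resp.\ $\leq 2n-1$), which is exactly what the stated bounds on $\dim\cF$ buy, and the residual class $\beta'$ ends with half-degree $\leq n$ (resp.\ $\leq n-1$), giving the shapes $a\cL^k$ and $\cL^{k-1}\cL'$. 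This also reveals the true role of the hypothesis $k_{i_0}\neq n$, which your proposal does not capture: if all $k_i=n$, every subproduct has half-degree a multiple of $n$ and the window $]n,2n-1]$ contains none, so the grouping is impossible. To repair your argument you would need to supply Lemma \ref{LLL}, or an equivalent intermediate-degree comparison mechanism.
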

\begin{proof} Let 
$$\delta = \min \left\{\frac{d}{4}+1, \frac{d+1}{3} \right\}$$
be the constant appearing in Theorem \ref{NLconj}(ii). 

\begin{lemma} \label{LLL}
Let $\alpha \in \Hdg^{2k}(\cU)$ with $k\in ]n , 2n]$. Suppose furthermore $k < \delta$. Then, there exist $\cL\in \Pic(\cU)$ relative ample and $\beta \in \Hdg^{4n-2k} (\mathcal{U} )$ such that the difference $\alpha - \cL^{2k-2n} \beta$ is supported on the Noether-Lefschetz locus.
\end{lemma}
\begin{proof}
According to the relative Hard Lefschetz isomorphism 
$$H^0(\cF, \HH^{2k})\cong H^0(\cF, \HH^{4n-2k}),$$ 
we can find a class $\cL^{2k-2n}\beta$ with $\cL\in \Pic(\cU)$ relative ample and 
$$\beta\in   \Hdg^{4n-2k} (\mathcal{U} ) $$ 
such that the restriction of $\alpha - \cL^{2k-2n}\beta$ to each fiber is zero. Now by hypothesis we have $k<\delta$ and we can apply Theorem \ref{NLconj}(ii) to $\alpha -\cL^{2k-2n}\beta$. We conclude that the latter is supported on the Noether-Lefschetz locus of $\cF$.
\end{proof}

We will now make repeated use of this lemma to prove the theorem in each of the two cases.

First consider Case (1),  then by hypothesis  $2n<\delta$. Lemma \ref{LLL} applies to every product $\alpha_{i_1} \cdots \alpha_{i_r}$ with $k_{i_1} + \ldots + k_{i_r} \in ]n , 2n]$. Repeating this  we get a  class  $\beta' \in  \Hdg^{2m} (\mathcal{U})$ with  $m\leq n$ such that the difference $\alpha-\cL^{k-m} \beta'$ satisfies \eqref{NLsupport}. Note that the restrictions of $\cL^{2n-m}\beta'$ and $\cL^{2n}$ to the very general fiber are proportional, Lemma \ref{LLL} therefore implies that $\cL^{2n-m}\beta'-a\cL^{2n}$ is supported on the NL locus for some $a\in \QQ$. As a result,  we have  $$\cL^{k-m}\beta'-a\cL^{k}=\cL^{k-2n} (\cL^{2n-m}\beta'-a\cL^{2n} )$$ satisfies \eqref{NLsupport}. 

Now consider Case (2),  then by hypothesis $2n-1<\delta$ and there is at least one class $\alpha_{i_0}$ of degree $\neq n$. Applying Lemma \ref{LLL} 
as in Case (1) we therefore conclude that there exists a class $\beta'\in \Hdg^{2m}(\cU)$ with $m\leq n-1$ such that the difference 
$$\alpha-\cL^{k-m}\beta'\in \Hdg^{2k}(\cU)$$
satisfies \eqref{NLsupport}. By the relative Hard Lefschetz theorem, we can find a  line bundle $\cL'\in \Pic(\cU)$ such that $\cL^{2n-m-1}\beta'$ and $\cL^{2n-2}\cL'$ agree on the very general fiber. It follows from Theorem \ref{NLconj}(ii) that $\beta=\cL^{k-1}\cL'$ is as desired. 
\end{proof}

We derive the  following result directly  from Corollary \ref{NLpart} and Theorem~\ref{induction1}.

\begin{proposition}\label{DRR0}
Set $b=\dim \cF_h^\ell$. Suppose that $b \geq \sup \{16n-8 , 12n-2 \}$ and $\mathrm{DR}(\cF_h^\ell)=\NL_{\hom}^\bullet (\cF_h^\ell)$,
then we have
$$\mathrm{NL}_{\hom}^\bullet (\mathcal{F}_h^\ell ) =\rR^\bullet_{\rm hom} (\mathcal{F}_h^\ell )= \widetilde{\rR}^\bullet_{\rm hom} (\mathcal{F}_h^\ell ).$$

\end{proposition}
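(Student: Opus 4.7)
The plan is to establish the single inclusion $\widetilde{\rR}^\bullet_{\hom}(\cF_h^\ell)\subseteq \NL^\bullet_{\hom}(\cF_h^\ell)$. Combined with the hypothesis $\mathrm{DR}^\bullet_{\hom}=\NL^\bullet_{\hom}$ and the chain $\mathrm{DR}^\bullet_{\hom}\subseteq \rR^\bullet_{\hom}\subseteq \widetilde{\rR}^\bullet_{\hom}$ recorded in the notation section, this forces the advertised three-way equality at once. Since the generators of $\widetilde{\rR}^\bullet_{\hom}$ are pushforwards $y=(\iota_\Sigma\circ\pi_\Sigma)_*(x)$ with $x\in \rH^\bullet_{\leq 2n}(\cU_\Sigma)$, it is enough to show every such $y$ lies in $\NL^\bullet_{\hom}$. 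I would argue by induction on $d:=\dim\cF_{\Sigma,h}^\ell$.

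For the base of the induction, $d<\tfrac{b}{2}+1$, the class $(\pi_\Sigma)_*(x)$ is a Hodge class on $\cF_\Sigma$, and the argument of Corollary \ref{NLpart}---which, as written, only uses that its input is a Hodge class and invokes Theorem \ref{NLconj}---applies verbatim to place $y=(\iota_\Sigma)_*(\pi_\Sigma)_*(x)$ in $\NL^\bullet_{\hom}(\cF_h^\ell)$.

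For the inductive step, $d\geq \tfrac{b}{2}+1$, the numerical hypothesis $b\geq \sup\{16n-8,\,12n-2\}$ yields $d\geq \tfrac{b}{2}+1\geq \sup\{8n-3,\,6n\}$, which is exactly the dimension condition for Case (1) of Theorem \ref{induction1}. Write $x$ as a $\QQ$-linear combination of products of the algebra generators, and drop the homogeneous components of cohomological degree $<4n$ (which are killed by $(\pi_\Sigma)_*$). It is then enough to treat each surviving product $\alpha=\prod_i \alpha_i \in \rH^{2k}_{\leq 2n}(\cU_\Sigma)$ with $k\geq 2n$. Theorem \ref{induction1}(1) furnishes
\[
\alpha \;=\; a\cL^{k}\;+\;\sum_{j} r_j\,(\rho_j)_*(\gamma_j), \qquad a\cL^k\in \mathrm{DCH}^\bullet_{\hom}(\cU_\Sigma),\ \ \gamma_j\in \rH^{2k-2}_{\leq 2n-1}(\cU_{\Sigma_j}),
\]
with $\rank\Sigma_j=\rank\Sigma+1$, so that $\dim\cF_{\Sigma_j}=d-1$. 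Pushing forward, $(\iota_\Sigma\circ\pi_\Sigma)_*(a\cL^{k})$ is a special $\kappa$-cycle and thus lies in $\mathrm{DR}^\bullet_{\hom}=\NL^\bullet_{\hom}$ by hypothesis; and the base-change identity $\pi_\Sigma\circ\rho_j=\iota_{\Sigma_j,\Sigma}\circ\pi_{\Sigma_j}$ (coming from the Cartesian description of $\cU_{\Sigma_j}$) rewrites each remaining contribution as $(\iota_{\Sigma_j}\circ\pi_{\Sigma_j})_*(\gamma_j)$ with $\gamma_j\in \rH^\bullet_{\leq 2n}(\cU_{\Sigma_j})$, which falls under the induction hypothesis.

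The main obstacle is not any single step but the bookkeeping of dimension bounds: the induction closes only if the range $d\geq \sup\{8n-3,\,6n\}$ in which Theorem \ref{induction1}(1) is available overlaps the range $d<\tfrac{b}{2}+1$ of the base case with no gap. This overlap is precisely the content of the sharpened hypothesis $b\geq \sup\{16n-8,\,12n-2\}$, which strengthens the $\mathrm{DR}$-bound $b\geq \sup\{16n-12,\,12n-4\}$ of Theorem \ref{tauthm} by exactly the margin needed to permit the extra inductive descent from $\Sigma$ to $\Sigma_j$.
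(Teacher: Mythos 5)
Your argument is correct and is essentially the paper's own proof: the paper likewise reduces to showing $\iota_*(\pi_*\rH^\bullet_{\leq 2n}(\cU))\subseteq \NL^\bullet_{\hom}(\cF_h^\ell)$, uses Theorem \ref{induction1} (Case (1)) to descend in dimension whenever $\dim\cF\geq\sup\{8n-3,6n\}$, absorbs the $\mathrm{DR}$-part via the hypothesis, and invokes Corollary \ref{NLpart} once $\dim\cF\leq\sup\{8n-4,6n-1\}\leq\frac{b}{2}$. Your bookkeeping of the overlap between the two regimes, and the observation that Corollary \ref{NLpart} really only needs a Hodge class as input, match the paper's reasoning.
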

\begin{proof} 
To ease notation we keep using $\pi:\cU\to \cF$ to denote the universal family $\pi_{\Sigma}^\ell:\cU_{\Sigma,h}^\ell\to \cF_{\Sigma,h}^\ell$  and  let $\iota:\cF\to\cF_{h}^\ell$ be the forgetful map. 
Under the assumption of the proposition we will in fact prove (the stronger result) that 
\begin{equation}\label{NL-inclusion}
\iota_\ast(\pi_\ast \rH^\bullet_{\leq 2n}(\cU) ) \subseteq \NL^\bullet_{\hom}(\cF_h^\ell)    
\end{equation}
for all $\cU\to \cF$.

According to Theorem \ref{induction1}, once $\dim \cF\geq \sup \{8n-3 , 6n \}$,  any class in  $\pi_{\ast}(\rH^\bullet_{\leq 2n} (\cU))$ is a linear combination of classes in $\rD\rR^\bullet_{\hom}(\cF)$ and in the images $(\iota ')_\ast(\pi'_\ast(\rH^\bullet_{\leq 2n-1} (\cU')))$ as
$$\xymatrix{\cU'\ar[r]^{\rho'}\ar[d]^{\pi'} & \cU\ar[d]^{\pi} \\ \cF'\ar[r]^{\iota'} & \cF}$$ 
runs over all the universal family of  sublattice polarized hyper-K\"ahler varieties in $\cF$. 
By our assumption, the pushforward of $\rD\rR^\bullet_{\hom}(\cF)$ is lying in $\NL_{\hom}^\bullet(\cF_h^\ell)$. So it suffices to prove 
\begin{equation}
(\iota\circ\iota ' )_\ast(\pi'_\ast(\rH^\bullet_{\leq 2n-1} (\cU')))\subseteq \NL_{\hom}^\bullet(\cF_h^\ell),
\end{equation}
with $\dim \cF'<\dim \cF$.  
This allows us to  cut the dimension of $\cF$ whenever $\dim \cF\geq \sup \{8n-3 , 6n \}$ and we are reduced to prove that \eqref{NL-inclusion} holds as long as $\dim \cF \leq \sup \{8n-4 , 6n-1 \}$. This finally follows from Corollary \ref{NLpart} and our hypothesis that $b\geq \sup \{16n-8 , 12n-2 \}$ since the latter implies that $\dim \cF\leq \frac{1}{2}\dim \cF_h^\ell$.  
\end{proof}

Note that the bound of $b$ in Proposition \ref{DRR0} will be enough to prove Theorem \ref{tauthm} in the case of $K3$  surfaces as $b=19>10$, but it fails short to deal with the case of $K3^{[2]}$-type hyper-K\"ahler manifolds, where $b=20<24$. To strengthen Proposition \ref{DRR0}, we need the following result.

\begin{lemma} \label{L:T8refined}
Suppose $b\geq 16n-12$ and $n>1$.  Let $\pi:\cU\to \cF$ be the universal family of a lattice polarized hyper-K\"ahler varieties in $\cF_h^\ell$ with $\dim \cF\leq 8n-4$.  Then  for any class
$$\alpha=(c_{2n}(T_\pi))^m\prod  \alpha_i\in \rH^{2k}_{\leq 2n}(\cU)$$
where $\alpha_i\in \Hdg^{2k_i}(\cU)$ with $k_i<2n$, we have 
\begin{equation}\label{NLinclusion2}
 \iota_\ast(\pi_\ast(\alpha))\in \NL_{\hom}^\bullet(\cF_h^\ell)
 \end{equation}  if  one of the following  conditions holds
\begin{enumerate}
\item [(i)] $\dim \cF\leq 8n-6$;
    \item [(ii)] the $\alpha_i$'s are either Chern classes of $T_\pi$ or relative ample line bundles;
    \item [(iii)] there exists some $k_i\neq n$ and $\sum k_i\geq 2n$. 
   \end{enumerate}
In particular, this implies $\iota_\ast(\pi_\ast \BV^\bullet_{\hom}(\cU))\subseteq \NL^\bullet_{\hom}(\cF_h^\ell)$. 

\end{lemma}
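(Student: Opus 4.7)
The plan is to induct on $\dim\cF$. The base case is Case (i): if $\dim\cF\leq 8n-6$ then the hypothesis $b\geq 16n-12$ gives $\dim\cF\leq b/2<b/2+1$, so the argument of Corollary \ref{NLpart} (which rests only on Theorem \ref{NLconj}(i)) applies verbatim to the Hodge class $\pi_\ast(\alpha)\in\Hdg^{2(k-2n)}(\cF)$ and yields $\iota_\ast\pi_\ast(\alpha)\in\NL^\bullet_{\hom}(\cF_h^\ell)$. This also closes every branch of the induction that manages to bring $\dim\cF$ below $8n-5$.

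Assume now $\dim\cF\in\{8n-5,8n-4\}$. In Case (iii), Case (2) of Theorem \ref{induction1} applies to the subproduct $\prod\alpha_i\in\rH^{2k'}_{\leq 2n}(\cU)$, because its hypotheses ($k_i<2n$, some $k_{i_0}\neq n$, $k'=\sum k_i\geq 2n$, and $\dim\cF\geq\sup\{8n-7,6n-3\}$) are all satisfied once $n>1$. Thus there exist relatively ample $\cL,\cL'\in\Pic(\cU)$ and lattices $\Sigma_j\supset\Sigma$ of rank $\rank(\Sigma)+1$ with
$$\prod\alpha_i-\cL^{k'-1}\cL'=\sum_j r_j(\rho_j)_\ast(\gamma_j),\qquad \gamma_j\in\rH^{2k'-2}_{\leq 2n-1}(\cU_{\Sigma_j}).$$
Multiplying by $(c_{2n}(T_\pi))^m$ and applying the projection formula, using $\rho_j^\ast c_{2n}(T_\pi)=c_{2n}(T_{\rho_j^\ast\pi})$, gives
$$\alpha=(c_{2n}(T_\pi))^m\cL^{k'-1}\cL'+\sum_j r_j(\rho_j)_\ast\bigl((\rho_j^\ast c_{2n}(T_\pi))^m\,\gamma_j\bigr).$$
The correction terms are pushforwards of classes of the required form from families with $\dim\cF_{\Sigma_j}=\dim\cF-1$, so by the induction hypothesis their $\iota_\ast\pi_\ast$-images lie in $\NL^\bullet_{\hom}(\cF_h^\ell)$; the leading term fits Case (ii). Thus Case (iii) reduces to Case (ii).

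Case (ii) is the heart of the argument and where I expect the main obstacle. Here $\alpha$ is a monomial in $c_j(T_\pi)$ and relatively ample line bundles. I would split by the total degree $k$ of $\alpha$: if $k<2n$ the pushforward vanishes for dimension reasons; if $k=2n$ the restriction $\alpha|_{X_f}$ is a top-degree Beauville--Voisin class, hence a rational multiple of the canonical point class, so $\pi_\ast(\alpha)$ is a rational multiple of $[\cF]$ and $\iota_\ast\pi_\ast(\alpha)$ is a rational multiple of the Noether--Lefschetz cycle $\iota_\ast[\cF]$; if $k>2n$ the restriction $\alpha|_{X_f}$ vanishes for trivial dimension reasons. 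One would wish to invoke Theorem \ref{NLconj}(ii) on $\alpha$ in this last subcase, but its degree bound $k<\delta=2n$ is precisely violated in the boundary regime $\dim\cF\in\{8n-5,8n-4\}$. The plan is to reduce modulo NL by iteratively applying Lemma \ref{LLL} to suitable subproducts together with fiberwise Beauville--Voisin relations (for instance $c_n(T_{X_f})^2$ is proportional to $c_{2n}(T_{X_f})$ modulo lower BV strata), so as to rewrite $\alpha$ as a combination of classes of the form $(c_{2n}(T_\pi))^{m'}\cL^{k-2nm'}$; the pushforward of each such class is explicit by the Fujiki relation and is handled by the case $k=2n$ or reduces to a family of strictly smaller dimension. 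Exploiting that the $\alpha_i$ lie in the (small) BV subring rather than being arbitrary Hodge classes is what lets this reduction go through at the tight bound $\dim\cF\leq 8n-4$. The concluding inclusion $\iota_\ast(\pi_\ast\BV^\bullet_{\hom}(\cU))\subseteq\NL^\bullet_{\hom}(\cF_h^\ell)$ then follows by linearity, since every BV monomial fits Case (ii).
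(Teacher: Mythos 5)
Your Case (i) and the reduction of Case (iii) to a ``leading term plus NL-supported corrections'' via Theorem \ref{induction1} are in the spirit of the paper, but the heart of the lemma --- the single exceptional degree at $\dim\cF\in\{8n-5,8n-4\}$ --- is not resolved by your plan, and this is a genuine gap. You correctly observe that Theorem \ref{NLconj}(ii) is unavailable there; but the same obstruction kills your proposed substitute. At $d=8n-4$ one has $\delta=\min\{d/4+1,(d+1)/3\}=2n$ for $n>1$, so Lemma \ref{LLL} only applies to subproducts of degree $\leq 2n-1$: you cannot use it (nor any ``fiberwise BV relation plus Theorem \ref{NLconj}(ii)'' argument) to trade a degree-$2n$ Hodge class for $a\cL^{2n}$ modulo NL. And even granting the reduction of $\alpha$ to classes of the form $(c_{2n}(T_\pi))^{m'}\cL^{k-2nm'}$, the remaining task is to show that $\pi_\ast(\cL^{k-2nm'}c_{2n}^{m'})$, a Hodge class on $\cF$ of cohomological degree $k-2n$, lies in $\NL^\bullet_{\hom}$; the Fujiki relation says nothing about this, and $k-2n$ sits exactly in the window $[\frac{d+1}{3},\frac{2d-1}{3}]$ where Theorem \ref{NLconj}(i) on $\cF$ is silent. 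So your Case (ii) ends where the real difficulty begins.

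The idea you are missing --- and the reason the hypotheses of (ii) and (iii) are what they are --- is a \emph{descent} argument: in the exceptional degree the class $\prod\alpha_i$ (after using Theorem \ref{induction1} in case (iii)/(b) to replace it by $\cL^{k'-1}\cL'$ modulo NL-supported terms) is a monomial in relative Chern classes and at most a few line bundles. Letting $\Sigma''$ be the lattice generated by those line bundles, the class is pulled back from the universal family over the moduli space $\cF''$ of $\Sigma''$-polarized manifolds, whose dimension exceeds $8n-5$ because $\Sigma''$ has small rank. On $\cF''$ the relevant cohomological degree is no longer in the forbidden window, so Theorem \ref{NLconj}(i)/Corollary \ref{NLpart} applies there, and one concludes because pullback along forgetful maps preserves the Noether--Lefschetz ring. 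This is exactly where conditions (ii) and (iii) enter: they guarantee descent to a sufficiently large moduli space (with a separate small case for $n=2$ and pure Chern monomials). Without this step your argument does not close at the tight bound $\dim\cF\leq 8n-4$.
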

\begin{proof}
As $b\geq 16n-12$, when the condition $(i)$ holds, we can conclude the assertion directly by Corollary \ref{NLpart}. It remains to consider the case when $\dim \cF=8n-5$ or $8n-4$.   We first consider the case where $\dim \cF = 8n-5 $. In that case, the proof of  Corollary \ref{NLpart} implies that \eqref{NLinclusion2} holds for every $k\neq \frac{8n-4}{3}+2n$. It remains to deal with the case where $k=\frac{8n-4}{3}+2n$ (the latter being forced to be an integer).  Write $\alpha' = \prod \alpha_i$; it is a class of degree 
$$\sum 2k_i=2k-4nm = \frac{1}{3} ( 4n(7-3m) -8 ).$$
With the assumption (ii) or (iii) holds, a simple exercise shows that we are in one of the following situations:
\begin{enumerate}
    \item [(a)] $m=2$ and $\alpha_i$ are relative Chern classes or relative ample line bundles, 
    \item [(b)] $m<2$ and there exists some $k_i\neq n$;
    \item [(c)] $n=2$ and $\alpha= [c_4 (T_\pi )] [c_2(T_{\pi})]^2$ or $[c_2(T_{\pi})]^4$.
\end{enumerate}
We now deal with each of these cases separately.

\subsubsection*{Case (a)} The class $\alpha'$ has degree $\frac{8n-16}{3}$ and it involves at most $\frac{2n-4}{3}$ distinct line bundles $\cL_i$. Let $\Sigma$ be the lattice generated by these line bundles and let $\cF''$ be the corresponding moduli space of $h$-ample $\Sigma$-polarized hyper-K\"ahler varieties. 
Then $\alpha'$ can be obtained as the pullback of a cohomology class $\tilde \alpha$ on the universal family above $\cF''$, in other words considering the diagram  
$$\xymatrix{\cU''\ar[d]^{\pi''} & \cU \ar[l]_{\rho''} \ar[d]^{\pi}\\ \cF'' & \cF\ar[l]_{\iota''}}$$
we have $\alpha' = (\rho'')^* \tilde \alpha$. 

Now we have $$\pi''_\ast(c_{2n}(T_\pi)^2 \tilde \alpha )\in \Hdg^{\frac{16n-8}{3}}(\cF'')$$ and 
$$\dim \cF''\geq \dim \cF_h^\ell-\frac{2n-4}{3}+1 > 8n-5 .$$
Theorem \ref{NLconj} therefore implies that  
$$\pi''_\ast(c_{2n}(T_\pi)^2 \tilde \alpha )\in \NL^{\bullet}_{\hom}(\cF'')$$ 
and hence 
$$\pi_\ast(\alpha)=(\iota'')^\ast( \pi''_\ast(c_{2n}(T_\pi)^2 \tilde \alpha))\in \NL^{\bullet}_{\hom}(\cF)$$ 
as pullback preserves the NL ring. 

\subsubsection*{In case (b)}  It follows from Theorem \ref{induction1} that $\alpha'-\cL^{k-2n-1}\cL'$ is supported on the NL loci of $\cF$ for some $\cL,\cL'\in \Pic(\cU)$. Then it suffices to show that 
\begin{equation}
\pi_\ast ([c_{2n}(T_\pi)]^m\cL^{k-2n-1}\cL')\in \NL^{k-2n}(\cF). 
\end{equation}
We proceed as in case (a). Let $\Sigma$ be the lattice generated by $\cL$ and $\cL'$. The class $[c_{2n}(T_\pi)]^m\cL^{k-2n-1}\cL'$ can be obtained as the pullback of a cohomology class on the universal family associated to the moduli space of $h$-ample $\Sigma$-polarized hyper-K\"ahler manifolds. The rest of the proof is similar. 

\subsubsection*{In case (c)} The class $\alpha$ can be obtained as the pullback of a class in  $\rH^{2k}_{\leq 2n}(\cU_h^\ell)$ and we proceed as in the first two cases. 

\medskip

Finally, in case $\dim \cF=8n-4$, \eqref{NLinclusion2} holds  for all $k\neq [\frac{8n-1}{3}]+2n$ from Corollary \ref{NLpart} and we can proceed the discussion for the case $k= [\frac{8n-1}{3}]+2n$ similarly. 
\end{proof}

\begin{proposition}\label{DRR}
Suppose that $b \geq 16n-12$, $n>1$ and $\mathrm{DR}^\bullet_{\hom}(\cF_h^\ell)=\NL_{\hom}^{\bullet}(\cF_h^\ell)$. Then we have  
$$\mathrm{NL}_{\hom}^\bullet (\mathcal{F}_h^\ell ) =\rR^\bullet_{\rm hom}(\mathcal{F}_h^\ell )$$
if $(\ast)$ holds;   
$$ \mathrm{NL}_{\hom}^\bullet (\mathcal{F}_h^\ell ) = \widetilde{\rR}^\bullet_{\rm hom} (\mathcal{F}_h^\ell ).$$
if  $(\ast\ast)$ holds. 
\end{proposition}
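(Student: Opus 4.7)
The plan is to run the same inductive scheme as in Proposition~\ref{DRR0}, performed as a downward induction on $d=\dim\cF$ for a lattice-polarized universal family $\pi:\cU\to\cF$ with forgetful map $\iota:\cF\to\cF_h^\ell$. Under $(\ast)$ I will prove the inclusion $\iota_\ast(\pi_\ast\BV^\bullet_{\hom}(\cU))\subseteq\NL^\bullet_{\hom}(\cF_h^\ell)$ (which yields $\rR^\bullet_{\hom}=\NL^\bullet_{\hom}$); under $(\ast\ast)$ I will prove the stronger $\iota_\ast(\pi_\ast\rH^\bullet_{\leq 2n}(\cU))\subseteq\NL^\bullet_{\hom}(\cF_h^\ell)$ (yielding $\widetilde{\rR}^\bullet_{\hom}=\NL^\bullet_{\hom}$). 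The hypothesis $\mathrm{DR}^\bullet_{\hom}=\NL^\bullet_{\hom}$ disposes of the DR-contributions produced by Theorem~\ref{induction1}.

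The base case $d\leq 8n-6$ is immediate from Corollary~\ref{NLpart}, because $b\geq 16n-12$ forces $d<\tfrac{b}{2}+1$. The genuinely new range is $d\in\{8n-5,8n-4\}$, where Corollary~\ref{NLpart} just fails but Lemma~\ref{L:T8refined} is tailored to kick in. In the BV case, any generator of $\BV^\bullet_{\hom}(\cU)$, after writing each $c_1(\cL_i)$ as a difference of $c_1$'s of relative ample line bundles and expanding, is a sum of monomials of the shape demanded by Lemma~\ref{L:T8refined}(ii); the lemma then gives the conclusion. In the $\rH^\bullet_{\leq 2n}$-case under $(\ast\ast)$, for each Hodge-class factor $\alpha_i\in\Hdg^{2k_i}(\cU)$ with $k_i\leq 2n-1$, one uses $(\ast\ast)$ (combined with relative Hard Lefschetz to dualize fiber degrees $>2n$ back into the range covered by $(\ast\ast)$) to replace $\alpha_i$ on the very general fiber by a Beauville-Voisin representative; Theorem~\ref{NLconj}(ii) then lifts this replacement to $\cU$ modulo NL-supported classes, and the BV case disposes of the result.

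For the inductive step $d\geq 8n-3$, I would apply Theorem~\ref{induction1}(1) to get
$$\alpha=\beta+\sum_j r_j(\rho_j)_\ast\gamma_j,\qquad \beta=a\cL^k\in \mathrm{DCH}^\bullet_{\hom}(\cU),\ \gamma_j\in\rH^\bullet_{\leq 2n-1}(\cU_{\Sigma_j}),\ \dim\cF_{\Sigma_j}=d-1.$$
The $\beta$-summand pushes forward to a class in $\mathrm{DR}^\bullet_{\hom}(\cF_h^\ell)=\NL^\bullet_{\hom}(\cF_h^\ell)$ by the hypothesis, and the $\gamma_j$-summands live on strictly smaller-dimensional bases, so the inductive hypothesis disposes of them.

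The main obstacle is closing the induction in the BV-case under only $(\ast)$: Theorem~\ref{induction1} delivers $\gamma_j\in\rH^\bullet_{\leq 2n-1}(\cU_{\Sigma_j})$, which is a priori outside the BV-ring on which the induction is set up. Under $(\ast\ast)$ this creates no problem, as $\rH^{\leq 2n-1}\subseteq\rH^{\leq 2n}$ and the induction carries to the $\rH^{\leq 2n}$-target. Under only $(\ast)$, one must argue that the particular $\gamma_j$'s produced by Theorem~\ref{induction1} can in fact be chosen inside $\BV^\bullet_{\hom}(\cU_{\Sigma_j})$ up to further NL-supported classes on yet larger lattices; the only information supplied by $(\ast)$ is that the middle-degree Hodge classes (fiber codimension~$n$) are Beauville-Voisin, and one must combine this with iterated relative Hard Lefschetz to reach the fiber degrees that actually appear in the $\gamma_j$'s. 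Verifying this reduction is the delicate technical step on which the whole argument for the $\rR$-case under $(\ast)$ rests.
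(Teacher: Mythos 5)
Your overall strategy (reduce to $\dim\cF\le 8n-4$ via Theorem \ref{induction1} and the inductive scheme of Proposition \ref{DRR0}, then invoke Lemma \ref{L:T8refined} with the help of $(\ast)$ or $(\ast\ast)$) is the paper's, and your treatment of the $(\ast\ast)$ case is essentially the argument given there. The gap is in the $(\ast)$ case, and you have flagged it yourself: you set up the downward induction on $\BV^\bullet_{\hom}(\cU)$, observe that Theorem \ref{induction1} produces error terms $\gamma_j\in\rH^\bullet_{\le 2n-1}(\cU_{\Sigma_j})$ which need not be Beauville--Voisin, and then propose --- without proof --- to push these $\gamma_j$ back into $\BV^\bullet_{\hom}(\cU_{\Sigma_j})$ modulo further NL-supported classes. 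That reduction is not available: condition $(\ast)$ only controls Hodge classes of fiber degree exactly $2n$, and there is no reason a Hodge class of degree $2k$ with $k<n$ on the very general fiber should be Beauville--Voisin, so the $\gamma_j$'s cannot in general be forced into the BV ring.

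The paper closes the induction differently: rather than shrinking the $\gamma_j$'s back to BV, it enlarges the inductive statement and proves the two inclusions $\iota_\ast(\pi_\ast\rH^\bullet_{\le 2n-1}(\cU))\subseteq\NL^\bullet_{\hom}(\cF_h^\ell)$ and $\iota_\ast(\pi_\ast\BV^\bullet_{\hom}(\cU))\subseteq\NL^\bullet_{\hom}(\cF_h^\ell)$ simultaneously for all $\cU\to\cF$ with $\dim\cF\le 8n-4$. The ring $\rH^\bullet_{\le 2n-1}$ is stable under the reduction of Theorem \ref{induction1} (the $\gamma_j$'s land in it), so the induction closes; and at the bottom of the induction ($\dim\cF\in\{8n-5,8n-4\}$) condition $(\ast)$ is exactly what is needed to feed a monomial $\prod\alpha_i$ with all $k_i\le 2n-1$ into Lemma \ref{L:T8refined}: if some $k_i\ne n$ one is in case (iii), and if all $k_i=n$ the product of any two factors restricts on the very general fiber to a degree-$2n$ Hodge class, which $(\ast)$ replaces by a Beauville--Voisin class; Theorem \ref{NLconj}(ii) spreads this replacement out over $\cU$ up to NL-supported terms (whose pushforwards land in $\NL^\bullet_{\hom}(\cF_h^\ell)$ since the relevant NL loci have dimension $\le 8n-6$), reducing to case (ii). This is precisely why $(\ast)$ suffices for $\rR^\bullet_{\hom}$ while $\widetilde{\rR}^\bullet_{\hom}$ requires $(\ast\ast)$. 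You should replace your unverified ``delicate technical step'' by this widening of the inductive hypothesis.
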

\begin{proof} 
According to the proof in Proposition \ref{DRR0}, we have 
$$ \mathrm{NL}_{\hom}^\bullet (\mathcal{F}_h^\ell ) = \widetilde{\rR}^\bullet_{\rm hom} (\mathcal{F}_h^\ell )$$
once the inclusion \eqref{NL-inclusion} holds for any  $\cU\to \cF$ with $\dim \cF\leq 8n-4$. So it suffices to check \eqref{NL-inclusion} when $\dim \cF\leq 8n-4$. When $(\ast\ast)$ holds, any class in $\rH^\bullet_{\leq 2n}(\cU)$ can be expressed as a linear combination of classes which, up to a class supported on the Noether-Lefschetz locus, satisfy the conditions in Lemma \ref{L:T8refined}. The assertion then follows from Lemma \ref{L:T8refined}.

Similarly, to prove 
$$ \mathrm{NL}_{\hom}^\bullet (\mathcal{F}_h^\ell ) = \rR^\bullet_{\rm hom} (\mathcal{F}_h^\ell ),$$
it suffices to show that 
\begin{equation}
\iota_\ast(\pi_\ast \rH^\bullet_{\leq 2n-1}(\cU) ) \subseteq \NL^\bullet_{\hom}(\cF_h^\ell)~ \hbox{and} ~\iota_\ast(\pi_\ast \BV^\bullet_{\hom} (\cU))\subseteq \NL^\bullet_{\hom}(\cF_h^\ell)
\end{equation}
for all $\cU\to \cF$ with $\dim \cF\leq 8n-4$. With the assumption $(\ast)$, any class in $\rH^\bullet_{\leq 2n-1}(\cU) $ can be written as a linear combination of classes which, up to a class supported on the NL locus, satisfy the conditions in Lemma \ref{L:T8refined}. Note that any class in $\rH^\bullet(\cU,\QQ)$ supporting on the NL locus of $\cF$ of dimension $\leq 8n-6$, its pushforward to $\rH^\bullet(\cF_h^\ell,\QQ)$  lies in $\NL_{\hom}(\cF_h^\ell)$.   Then our assertion  follows from Lemma \ref{L:T8refined}. 
\end{proof}

\subsection{Proof of Theorem \ref{tauthm}}

Let $b\geq \sup \{16n-12, 12n-4 \}$.  We first prove the following
\begin{lemma} \label{Lend1}
Let $\cL$ be a universal polarization of $\cU_h^\ell\to\cF_h^\ell$. For all integer $k \geq 2n+1$ we have 
\begin{equation}\label{DRNL}
    (\pi_h^\ell)_\ast (\cL^k)\in \NL^{k-2n}_{\hom}(\cF_h^\ell).
\end{equation}
\end{lemma}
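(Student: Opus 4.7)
The plan is to view $\alpha := (\pi_h^\ell)_\ast(\cL^k)$ as a Hodge class in $\Hdg^{2(k-2n)}(\cF_h^\ell)$ and to analyze it according to its codimension $j := k - 2n \geq 1$, splitting into an ``extreme codimension'' case handled directly by Theorem \ref{NLconj}(i) and a ``middle codimension'' case that requires lifting back up to the total space.

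First I would apply Theorem \ref{NLconj}(i) to $\cF_h^\ell$ itself, taking $d = b$. It says that every Hodge class on $\cF_h^\ell$ of codimension $j$ already lies in $\NL^j_{\rm hom}(\cF_h^\ell)$ whenever $j < (b+1)/3$ or $j > (2b-1)/3$, so in either of these extreme ranges the lemma is immediate.

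For $j$ in the middle range $[(b+1)/3, (2b-1)/3]$, I would work on $\cU_h^\ell$ and exploit the fact that $\cL^k$ restricts to zero on every fibre of $\pi_h^\ell$: since $k \geq 2n+1$ and the fibres $X$ have dimension $2n$, we have $H^{2k}(X) = 0$. Whenever $k$ additionally satisfies $k < \min\{b/4+1,(b+1)/3\}$, Theorem \ref{NLconj}(ii) produces an NL-supported presentation $\cL^k = \sum_j r_j (\rho_j)_\ast \gamma_j$ on $\cU_h^\ell$, whose pushforward becomes
\[
\alpha = \sum_j r_j (\iota_{\Sigma_j})_\ast (\pi_{\Sigma_j,h}^\ell)_\ast \gamma_j.
\]
Because pushforward from a Noether-Lefschetz stratum preserves the NL subring, an induction on $\dim \cF_\Sigma$ combined with Corollary \ref{NLpart} should then identify $\alpha$ as NL.

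The main obstacle is that the condition $k < \min\{b/4+1,(b+1)/3\}$ under which Theorem \ref{NLconj}(ii) is directly available does not cover every middle-codimension $k$, so a naive combination of the two steps leaves a genuine gap. I expect to close it by a descending induction on $k$: applying Theorem \ref{induction1} to $\cL^k \in \rH^\bullet_{\leq 2n}(\cU_h^\ell)$ writes $\cL^k$, up to NL-supported corrections, in the form $a(\cL')^k$ for some relatively ample $\cL' \in \mathrm{Pic}(\cU_h^\ell)$. Since the generic relative Picard rank of $\pi_h^\ell$ is one, one can decompose $\cL' = p\cL + \pi^\ast M$ with $p > 0$ and $M \in \mathrm{Pic}(\cF_h^\ell)_\QQ$, and expanding via the projection formula expresses $\alpha$ as a linear combination of strictly smaller pushforwards $\pi_\ast \cL^{i}$ with $i < k$ multiplied by classes on $\cF_h^\ell$, which by descending induction are themselves NL.
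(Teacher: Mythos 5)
Your opening reduction is fine: since $\pi_{h*}^\ell(\cL^k)$ is a Hodge class of codimension $j=k-2n$ on $\cF_h^\ell$, Theorem \ref{NLconj}(i) does dispose of all $k$ with $j<\frac{b+1}{3}$ or $j>\frac{2b-1}{3}$ (in particular of the base case $k=2n+1$). The problem is the step you propose for the remaining middle range, which is circular. Theorem \ref{induction1} produces a relatively ample $\cL'$ and a scalar $a$ with $\cL^k-a(\cL')^k$ supported on the Noether--Lefschetz locus, where $a$ is normalised by matching restrictions to the very general fibre; since that fibre has Picard rank one, $\cL'|_X=p\,\cL|_X$ and the normalisation forces $ap^k=1$. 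Hence, writing $\cL'=p\cL+\pi^*M$ and expanding, the coefficient of $\cL^k$ inside $a(\cL')^k$ is exactly $1$: the terms $\pi_*\cL^k$ cancel from both sides of your identity, which then only asserts that a combination of the lower-order classes $M^{k-i}\pi_*\cL^i$ is NL-supported and says nothing about $\pi_*\cL^k$ itself. The quickest way to see the vacuity is to note that Theorem \ref{induction1} permits the choice $\cL'=\cL$, $a=1$, for the input $\alpha=\cL^k$, and there is no freedom to do better because every relatively ample bundle restricts to a multiple of $\cL$ on the generic fibre. The genuine exponent drop in the paper's proof comes from geometry, not from re-expanding a line bundle: on a proper stratum $\cF_{\Sigma,h}^\ell$ the main term produced by Theorem \ref{induction1} is $a\cL_\Sigma^{k+\dim\cF_{\Sigma,h}^\ell-b}$ with exponent $\le k-1$, and it is this term that the induction on $k$ (applied on $\cF_h^\ell$ and pulled back to the stratum) absorbs.

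There is a second gap in your treatment of the NL-supported corrections. Even when a presentation $\cL^k=\sum_j r_j(\rho_j)_*\gamma_j$ is available, the classes $\gamma_j$ live over codimension-one strata of dimension $b-1$, far outside the range $\dim\cF_{\Sigma,h}^\ell<\frac{b}{2}+1$ where Corollary \ref{NLpart} applies, so ``induction on $\dim\cF_\Sigma$ combined with Corollary \ref{NLpart}'' does not get started. One must re-apply Theorem \ref{induction1} on each stratum, feed the resulting main terms into the induction on $k$ as above, push the corrections to deeper strata, invoke Corollary \ref{NLpart} only once $\dim\cF_{\Sigma,h}^\ell\le\sup\{8n-6,6n-2\}$, and treat the borderline dimensions $8n-4$ and $8n-5$ separately via Lemma \ref{L:T8refined}(iii). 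This double induction on $k$ and on the stratum dimension, together with the borderline case analysis, is the actual content of the paper's proof and is absent from your argument.
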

\begin{proof}
We proceed by induction on $k \geq 2n+1$.  Theorem \ref{NLconj}(ii) implies that $\cL^{2n+1}$ is supported on the Noether-Lefschetz locus of $\cF_h^\ell$.  Assume \eqref{DRNL} holds for $k\leq k_0$. Now let $k=k_0+1>2n$. We are therefore reduced to prove that for every $\Sigma$ and every $\gamma\in \Hdg^{2j}(\cF_{\Sigma,h}^\ell)$ with $j\leq 2n-1$ and $j+j'+b=k+\dim \cF_{\Sigma,h}^\ell$, we have
\begin{equation}\label{inc-ind3}
(\iota_{\Sigma})_\ast(\pi_\Sigma)_\ast( \cL_\Sigma^{j'} \gamma)\in \NL^{k-2n}_{\hom}(\cF_{h}^\ell),
\end{equation}
where
$$\xymatrix{\cU_{\Sigma, h}^\ell \ar[r]^{\rho_\Sigma}\ar[d]^{\pi_\Sigma } & \cU_h^\ell \ar[d]^{\pi_h^\ell} \\ \cF_{\Sigma , h}^\ell \ar[r]^{\iota_\Sigma } & \cF_h^\ell}$$ 
and we write $\cL_{\Sigma}=\rho^\ast_\Sigma \cL$ for simplicity. 

The proof of \eqref{inc-ind3} is similar to that of Proposition \ref{DRR0}. We first explain how to reduce to $\cF_{\Sigma,h}^\ell$ with $\dim \cF_{\Sigma,h}\leq \sup \{8n-4 , 6n-1 \}$. Indeed: if $\dim \cF_{\Sigma,h}^\ell\geq \sup \{ 8n-3 , 6n \}$, Theorem \ref{induction1} implies that there exists a constant $a$ such that the difference 
\begin{equation}\label{ind-diff}
(\cL_\Sigma)^{2n-j} \gamma- a\cL_\Sigma^{2n}    
\end{equation}
is supported on the Noether-Lefschetz locus of $\cF_{\Sigma,h}^\ell$. It follows that 
\begin{equation}\label{diff-Nl-drr}
\cL_\Sigma^{j'} \gamma - a \cL_\Sigma^{k + \dim \cF_{\Sigma , h}^\ell -b} = \cL_\Sigma^{k+\dim \cF_{\Sigma,h}^\ell-b-2n} (\cL_\Sigma^{2n-j} \gamma- a \cL_\Sigma^{2n})
\end{equation}
can be expressed as linear combinations of the pushforward of the class $$\cL_{\Sigma_i}^{j_i} \gamma_i\in \rH^\bullet_{\leq 2n-1} (\cU^\ell_{\Sigma_i,h})$$ via $\rho_{\Sigma_i}$, where $\rho_{\Sigma_i}:\cU_{\Sigma_i,h}^\ell\to \cU_{\Sigma,h}^\ell$  is the universal family of some sublattice polarized hyper-K\"ahler varieties and $\gamma_i\in \Hdg^{2k_i}(\cF_{\Sigma_i,h}^\ell)$ with $k_i<2n$.

Since $b-\dim \cF_{\Sigma,h}^\ell \geq 1$, the inductive hypothesis implies that 
$$(\pi_h^\ell)_\ast\cL^{k+\dim \cF^\ell_{\Sigma,h}-b}\in \NL^{\bullet}_{\hom}{\cF_h^\ell},$$ 
therefore that 
\begin{equation}\label{eqind2}
a(\pi_{\Sigma})_\ast(\cL_\Sigma)^{k+\dim \cF_{\Sigma,h}^\ell-b}\in \NL^\bullet_{\hom}(\cF^\ell_{\Sigma,h}).
\end{equation}
and hence that the pushforward $(\iota_\Sigma)_\ast $ of the class \eqref{eqind2} is in $\NL^{k-2n}_{\hom}(\cF_h^\ell)$. 

We are therefore reduced  to show that $(\iota_{\Sigma_i})_\ast (\pi_{\Sigma_i})_\ast(\cL_{\Sigma_i}^{j_i} \gamma_i)\in\NL_{\hom}^\bullet(\cF_h^\ell)$ and this allows us to reduce the dimension of $\cF_{\Sigma,h}^\ell$. 

It remains to prove \eqref{inc-ind3} when $\dim \cF_{\Sigma,h}^\ell \leq \sup \{8n-4 , 6n-1 \}$. Since $b\geq \sup \{16n-12, 12n-4 \}$, Corollary \ref{NLpart} applies as long as $\dim \cF_{\Sigma,h}^\ell \leq \sup \{8n-6 , 6n-2 \}$ and we are left with the cases where $\dim \cF^\ell_{\Sigma,h}=8n-4$ or $8n-5$ (if $n>1$). 
In both cases,  any class of the form \eqref{inc-ind3} satisfies the condition (iii) of  Lemma \ref{L:T8refined} and hence belongs to  $\NL^\bullet_{\hom}(\cF_h^\ell)$. 

\end{proof}

Strengthening the proof of Lemma \ref{Lend1} we get:

\begin{lemma}\label{Lend3}
Consider  $\cF_{\Sigma,h}^\ell$ of dimension $\geq \sup \{8n-5 ,6n-2 \}$. Let $\cL_1 , \ldots , \cL_m \in  \Pic(\cU_{\Sigma,h}^{\ell} )$ and $\gamma\in   \Hdg^{2k} (\cU_{\Sigma,h}^{\ell})$ with $k<2n$. Suppose $m+k >2n$, then we have 
\begin{equation}\label{DRNL2}
(\iota_{\Sigma})_\ast (\pi_{\Sigma})_\ast(\cL_1\cL_2\ldots \cL_m \gamma)\in \NL^\bullet_{\hom}(\cF_{h}^\ell).
 \end{equation}
\end{lemma}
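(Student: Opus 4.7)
The plan is to mirror the inductive strategy of Lemma \ref{Lend1}, proceeding by induction on $d = \dim \cF_{\Sigma,h}^\ell \geq \sup\{8n-5, 6n-2\}$.

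For the base case $d \in \{8n-5, 8n-4\}$ (assuming $n > 1$; the K3 case $n=1$ would be handled separately via Theorem \ref{NLconj}(ii), using that a product of more than $2n = 2$ Hodge classes restricts trivially to a K3 fiber), I would rewrite $\alpha = \cL_1 \cdots \cL_m \gamma$ as $\prod_i \alpha_i$ with $\alpha_i \in \Hdg^{2k_i}(\cU_{\Sigma,h}^\ell)$, $k_i \in \{1, k\}$, and verify the hypotheses of Lemma \ref{L:T8refined}(iii): all $k_i < 2n$; the line-bundle factors provide some $k_i = 1 \neq n$; and $\sum k_i = m+k > 2n$. The conclusion $(\iota_\Sigma)_*(\pi_\Sigma)_*(\alpha) \in \NL^\bullet_{\hom}(\cF_h^\ell)$ then follows directly from that lemma.

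For the inductive step $d \geq \sup\{8n-3, 6n\}$, I would invoke Theorem \ref{induction1}(1) (valid as $m+k > 2n$ and each factor is a Hodge class of degree $<2n$) to decompose
\[
\cL_1 \cdots \cL_m \gamma = a\cL^{m+k} + \sum_j r_j (\rho_j)_*(\gamma_j),
\]
with $\cL \in \Pic(\cU_{\Sigma,h}^\ell)$ relative ample, $\gamma_j \in \rH^{2(m+k)-2}_{\leq 2n-1}(\cU_{\Sigma_j})$, and $\dim \cF_{\Sigma_j,h}^\ell = d-1$ (still $\geq \sup\{8n-5, 6n-2\}$). The leading term's pushforward lies in $\NL^\bullet_{\hom}(\cF_h^\ell)$ by Lemma \ref{Lend1}, whose proof adapts verbatim to a relative ample line bundle on any lattice-polarized universal family (its inductive descent already operates in this generality). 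For each remainder, using $(\iota_\Sigma)_*(\pi_\Sigma)_*(\rho_j)_* = (\iota_{\Sigma_j})_*(\pi_{\Sigma_j})_*$, I would expand $\gamma_j$ as a linear combination of monomials $\cL'_1 \cdots \cL'_{m'} \gamma'$ with $\gamma'$ a Hodge class of degree $k' < 2n$ and total degree $m'+k' = m+k-1 \geq 2n$, then apply the induction hypothesis at the strictly smaller dimension $d-1$.

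The step I expect to require most care is the boundary case $m+k = 2n+1$: the monomials produced then satisfy $m'+k' = 2n$ and therefore fall outside the hypothesis of Lemma \ref{Lend3}. This case is nonetheless tractable, since such a monomial has cohomological degree $4n$ on $\cU_{\Sigma_j}$ --- the relative top degree --- so $(\pi_{\Sigma_j})_*$ collapses it to a scalar in $H^0(\cF_{\Sigma_j,h}^\ell) \cong \QQ$, and then $(\iota_{\Sigma_j})_*$ returns a rational multiple of $[\iota_{\Sigma_j}(\cF_{\Sigma_j,h}^\ell)]$, which lies in $\NL^\bullet_{\hom}(\cF_h^\ell)$ by definition. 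This closes the induction.
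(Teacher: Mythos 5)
Your overall strategy (reduce to a power of a relative ample line bundle via Theorem \ref{induction1}, descend through sublattice-polarized families, and finish the low-dimensional cases with Lemma \ref{L:T8refined}(iii)) is the same as the paper's, but there is a genuine gap in your treatment of the leading term $a\cL^{m+k}$. Theorem \ref{induction1} only produces \emph{some} relative ample $\cL \in \Pic(\cU_{\Sigma,h}^\ell)$, and the statement you need for it --- that $(\iota_\Sigma)_*(\pi_\Sigma)_*(\cL^{m+k}) \in \NL^\bullet_{\hom}(\cF_h^\ell)$ for an arbitrary relative ample line bundle on an arbitrary lattice-polarized family --- is not Lemma \ref{Lend1} but precisely the case $\gamma=1$, $\cL_1=\cdots=\cL_{m+k}=\cL$ of Lemma \ref{Lend3} itself, at the \emph{same} dimension $d$; your induction is on $d$ alone, so the inductive hypothesis does not cover it. The claim that Lemma \ref{Lend1}'s proof ``adapts verbatim'' is moreover false: the base case of that induction applies Theorem \ref{NLconj}(ii) to $\cL^{2n+1}$, which requires $2n+1 < \min\{d/4+1,(d+1)/3\}$, and this fails for $d$ near the lower bound $8n-5$. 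The paper sidesteps all of this by fixing the reference class to be $\cL_\Sigma = \rho_\Sigma^*\cL$, the pullback of the universal polarization of $\cU_h^\ell$: then $(\pi_\Sigma)_*(\cL_\Sigma^{m+k}) = \iota_\Sigma^*\bigl((\pi_h^\ell)_*(\cL^{m+k})\bigr)$ by base change, Lemma \ref{Lend1} as stated applies upstairs, and pullback and pushforward preserve the NL ring. Your argument becomes correct once you make this specific choice of $\cL$ in Theorem \ref{induction1}, which is legitimate since the relative Hard Lefschetz argument of Lemma \ref{LLL} works with any fixed relative ample class.

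Two smaller points. First, when you ``expand $\gamma_j$ as a linear combination of monomials $\cL'_1\cdots\cL'_{m'}\gamma'$ with $\gamma'$ a single Hodge class of degree $<2n$,'' note that a general element of $\rH^{\bullet}_{\leq 2n-1}(\cU_{\Sigma_j})$ is a sum of products of \emph{several} Hodge classes of degree up to $2n-1$ and need not have this shape; what saves you is that the remainders actually produced by Theorem \ref{induction1} are, by the definition of being supported on the Noether--Lefschetz locus, pushforwards of \emph{single} Hodge classes multiplied (via the projection formula) by powers of $\cL_\Sigma$, so the shape required by the inductive hypothesis is preserved --- but this needs to be said. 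Second, your parenthetical for $n=1$ does not work as stated: for $d$ near the lower bound, a class of degree $m+k\geq 3$ violates the restriction $k<\min\{d/4+1,(d+1)/3\}$ in Theorem \ref{NLconj}(ii); for $K3$ surfaces one should instead invoke Corollary \ref{NLpart}, which applies because $b=19$.
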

\begin{proof}
If $\dim \cF_{\Sigma,h}^\ell \geq \sup \{8n-3 ,6n \}$, Theorem \ref{induction1} implies that there exists a constant $a$ such that the class
\begin{equation}\label{difference}
    (\prod\limits_{i=1}^m \cL_i )\gamma- a \cL_\Sigma^{k+m}
\end{equation}
is supported on the Noether-Lefschetz locus of $\cF_{\Sigma,h}^\ell$. Here $\cL_\Sigma = \rho_\Sigma^\ast \cL$ still denotes the pull-back of the universal polarisation fixed in Lemma \ref{Lend1}.  

Since $m+k >2n$, Lemma \ref{Lend1} implies that $(\pi_h^\ell )_\ast (\cL^{k+m} )$ belongs to $\NL^\bullet_{\hom}(\cF_{h}^\ell)$. We conclude that 
$$(\pi_\Sigma)_\ast(\cL_\Sigma^{k+m}) \in \NL^\bullet_{\hom}(\cF_{\Sigma,h}^\ell).$$

It remains to prove that the pushforward of \eqref{difference} is lying in $\NL_{\hom}^\bullet (\cF_h^\ell)$. 
By a repeated use of Theorem \ref{induction1}, as in the proof of Lemma \ref{Lend1}, we are inductively reduced to prove that \eqref{DRNL2} holds when $\cF_{\Sigma,h}^\ell \leq \sup \{ 8n-4, 6n-1 \}$. The two cases we are left with follow from Lemma \ref{L:T8refined}.
\end{proof}

Lemma \ref{Lend3} implies in particular 
that for all $\cF_{\Sigma,h}^\ell$ of dimension $\geq \sup \{8n-3 , 6n \}$ we have
\begin{equation*}
(\iota_{\Sigma })_\ast((\pi_{\Sigma, h}^\ell)_\ast\mathrm{DCH}^\bullet_{\hom}(\cU_{\Sigma,h}^\ell))\subseteq \NL^\bullet_{\hom}(\cF_h^\ell),
\end{equation*} 
and hence $\mathrm{DR}_{\hom}^{\bullet}(\cF_h^\ell)=\NL_{\hom}^\bullet(\cF_h^\ell)$. Then we get \eqref{tauconj} from  Proposition \ref{DRR0} and  Proposition \ref{DRR}. 
 
 To conclude the proof of Theorem \ref{tauthm}, it remains to check that both $K3$ surfaces and $K3^{[2]}$-type hyper-K\"ahler manifolds satisfy the conditions. 

For $K3$ surfaces, the second Betti number is $22$ and hence $b=\dim \cF_h^\ell=19$, which is greater than both $16-8=8$ and $12-2= 10$.  

For $K3^{[2]}$-type hyper-K\"ahler manifolds, the second Betti number is $23$ and $b=\dim \cF_{h}^\ell=20$, which is exactly $16\times 2-12 = 12 \times 2 -4$.  In this case, as a representation of $\mathrm{O} (2,b ; \mathbb{R})$ we have $\HH^4=\mathrm{Sym}^2 (\HH^2)$. The primitive part of $\HH^2$ is the standard representation of $\mathrm{O}(h^\perp)$.  As a representation of $\mathrm{O}(\Sigma^\perp)$, the space $\HH^2$ decomposes as the direct sum of the subspace spanned by $\Sigma$ on which $\mathrm{O}(\Sigma^\perp)$ acts trivially and the standard representation $\HH^2_{\rm prim}$. 

Then the trivial isotypic subspace of $\mathrm{Sym}^2 (\HH^2)$ decomposes as the direct sum of $\mathrm{Sym}^2(\langle \Sigma \rangle )$ and the trivial summand in $\mathrm{Sym}^2(\HH^2_{prim})$; in otherwords the trivial isotypic subspace is spanned by products of line bundle classes and the second Chern class of the tangent bundle. This implies condition $(\ast )$. 

\begin{remark}
From our proof, one can see that we do not need the full strength of condition $(\ast)$ or $(\ast\ast)$. What we really need is that:  the group of Hodge classes of  degree $=2n$ (or $\leq 2n$ respectively) on general fibers of $\cU_{\Sigma,h}^\ell\to \cF_{\Sigma,h}^\ell$ is spanned by the product of line bundles and classes which can descends to the general fiber of $\cU_h^\ell\to \cF_h^\ell$.
\end{remark}

\section{Another correction}
There are some minor issues in the paper. In section 2.4, we say that the orthogonal group $G_X$ can act on the entire cohomology ring of $X$ via the monodromy action. This is not true is general. There is only a Spin group action on the entire cohomology ring and it does not factor through the orthogonal group in general (e.g. this fails for generalized Kummer varieties). But $G_X$ does act on the ring of even degree cohomology groups via the LLV action. Two actions agree after taking level structures (passing to a finite index subgroup).  See \cite[Remark 2.4.1]{Duke}. Note that the $G_X$ action is only needed to construct theta classes in even degrees, so this does not affect our results.

\bibliographystyle {plain}
\bibliography{main}
\end{document}